\newtheorem{thm}{Theorem}[section]
\newtheorem*{main}{Main Theorem}
\newtheorem{lemma}[thm]{Lemma}
\theoremstyle{definition}
\newtheorem{remark}{Remark}[section]
\newtheorem{cor}[thm]{Corollary}
\newtheorem{pro}[thm]{Proposition}
\title[Measures of Intermediate Entropies]{Measures of Intermediate Entropies for Skew Product Diffeomorphisms}
\author[Peng Sun]{}
\subjclass{Primary: 37D25, 37C40; Secondary: 37A05.}
 \keywords{entropy, ergodic measure, Lyapunov exponents, skew product, horseshoe,
 return map, Pesin theory}
 \email{peng\_sun@math.psu.edu}
\begin{document}

\maketitle

\centerline{\scshape Peng Sun }
\medskip
{\footnotesize
 \centerline{Department of mathematics}
   \centerline{The Pennsylvania State University}
   \centerline{University
Park, PA 16802, USA}
} 

\begin{abstract}
   In this paper we study a skew product map $F$ with a measure $\mu$
   of positive entropy.
We show that if on the fibers the map are $C^{1+\alpha}$ diffeomorphisms with
nonzero Lyapunov exponents, then $F$ has ergodic measures of arbitrary intermediate entropies. To construct these measures
we find a set on which the return map is a skew product with horseshoes
along fibers. We can control the average return time and show the maximum
entropy of these measures can be arbitrarily close to $h_\mu(F)$.
\end{abstract}

\section{Introduction}
Entropy has been one of the centerpieces in dynamics. It reflects
the complexity of the system. For smooth systems, positive topological entropy
comes from some (partially) hyperbolic structure, and is conjectured to be
accompanied with plenty of invariant measures. The
work presented here explores answers to such conjectures in a case of
skew product maps.

For a compact Riemannian manifold $M$ and a $\mathrm{C}^{1+\alpha}$
($\alpha>0$, this means the derivatives are $\alpha$-H\"older continuous)
 diffeomorphism $f$ on $M$, if $f$ preserves an ergodic measure $\mu$, then
 there are real numbers $\lambda_k$, $k=1, 2, \dots, l$,  such that for $\mu$-almost every point $x$, there are subspaces $E_k(x)$ of the tangent space $T_xM$
such that for any vector $v\in E_k(x)\backslash\{0\}$,
$$\lambda(v):=\lim_{n\to\infty}\frac{\log\|df^{n}v\|}n=\lambda_k$$
Here $l$ is some integer number no more than the dimension of $M$ and these subspaces
are invariant of $df$, the derivative of $f$. These numbers $\lambda_k$ are
called Lyapunov exponents. We say $\mu$ is a hyperbolic measure if all Lyapunov
exponents are nonzero. Three decades ago,
A. Katok established a well-known result as following:

\begin{thm}\label{ThKa}\textnormal{(Katok, \cite{Ka2, KM1})}
If the metric entropy $h_\mu(f)>0$ and $\mu$ is an ergodic hyperbolic measure, then
for any $\epsilon>0$, there is a hyperbolic horseshoe $\Lambda\subset M$ such that
$h(f|_\Lambda)>h_\mu(f)-\epsilon.$
\end{thm}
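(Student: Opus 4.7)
The plan is to combine Pesin theory (to recover uniform hyperbolicity on sets of nearly full measure) with the Brin--Katok local entropy formula and an ergodic return argument that packages exponentially many $(n,\delta)$-separated orbits into a Smale horseshoe.

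First, because $\mu$ is hyperbolic, I would invoke Pesin theory to obtain an exhaustion by compact Pesin blocks $\Lambda_\ell$ on which the Oseledets splitting $E^s\oplus E^u$ is continuous, the angles between $E^s$ and $E^u$ are bounded below by a constant depending only on $\ell$, and the local stable and unstable manifolds have uniform size $r_\ell>0$; the iterates $\|df^{\pm n}|_{E^{s/u}}\|$ satisfy uniform exponential estimates up to a multiplicative constant. Choose $\ell$ so large that $\mu(\Lambda_\ell)>1-\epsilon/(10h_\mu(f))$.

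Next, I would apply the Brin--Katok theorem: for $\mu$-a.e.\ $x$,
$$h_\mu(f)=\lim_{\delta\to 0}\liminf_{n\to\infty}-\frac{1}{n}\log\mu(B_n(x,\delta)),$$
where $B_n(x,\delta)=\{y:d(f^jx,f^jy)<\delta,\ 0\le j<n\}$. For a sufficiently small $\delta>0$ this produces, for all large $n$, a set of $\mu$-measure close to $1$ of points whose Bowen balls have mass $\le e^{-n(h_\mu-\epsilon/3)}$. A maximal separated set argument on the intersection with $\Lambda_\ell$ then delivers at least $e^{n(h_\mu-\epsilon/2)}$ many $(n,\delta)$-separated points. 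Restricting further (by Birkhoff) to points whose orbits visit $\Lambda_\ell$ with frequency close to $\mu(\Lambda_\ell)$ and applying pigeonhole over a fixed finite cover of $\Lambda_\ell$, I can retain an exponentially large subfamily for which both $x$ and $f^n x$ lie in prescribed small cells of $\Lambda_\ell$ and share a common return time $N\approx n$.

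Finally, the Pesin estimates let me run the inclination lemma (graph transform) in adapted Lyapunov charts at the base- and return-points: each surviving orbit produces, under $f^N$, a local unstable disk that stretches fully across the target cell and intersects every local stable disk transversely. The locally maximal invariant set $\Lambda$ of these intersections is a uniformly hyperbolic horseshoe on which $f^N$ is conjugate to a full shift with at least our surviving number of symbols, giving $h(f|_\Lambda)\ge N^{-1}\log(\text{branches})\ge h_\mu(f)-\epsilon$.

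The principal obstacle is this last step: producing genuine transverse intersections between the stable and unstable laminations of distinct returning orbits. Pesin estimates are uniform only in the adapted (Lyapunov) metric, which itself distorts at an exponential rate along an orbit, so one must balance this distortion against the exponent gap, using the $C^{1+\alpha}$ hypothesis in Hadamard--Perron style to control nonlinear remainders on the full scale $r_\ell$. All other steps are essentially bookkeeping once the block $\Lambda_\ell$, the separation scale $\delta$, and the cells of $\Lambda_\ell$ have been chosen compatibly.
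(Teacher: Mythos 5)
The paper states Theorem~\ref{ThKa} as a quoted result of Katok (\cite{Ka2,KM1}) and gives no proof of it, so there is no in-paper argument to compare against. Your sketch is a faithful high-level outline of the standard proof in the cited references: Pesin blocks with uniform estimates, Katok's entropy formula via $(n,\epsilon)$-separated sets inside sets of almost full measure, Birkhoff recurrence, pigeonhole over return times and over a finite cover of the block, and a Hadamard--Perron/graph-transform closing step realizing the surviving orbit segments as branches of a locally maximal hyperbolic set on which a power of $f$ is a full shift. One point to make precise in the pigeonhole step: you must arrange for $x$ and $f^{N}x$ to fall in the \emph{same} small cell of $\Lambda_\ell$ (within the nonuniform closing-lemma scale $\beta_\ell$), not merely in two separately prescribed cells; otherwise the unstable disks carried forward by $f^N$ land near one cell while the stable disks of the other returning orbits sit near a different cell, and the transverse interlocking that produces the full shift does not close up. With that adjustment the counting still goes through, since the extra pigeonhole factor is only polynomial in $n$. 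It is also worth observing that the body of this paper (Sections~3--4) is an adaptation of exactly this strategy to the skew-product setting: the Pesin block together with its cells is replaced by the ``regular tube'' of Proposition~\ref{PrRT}, and the single fixed return time $N$ is replaced by a fibered, merely integrable return-time function, whose control via Theorems~\ref{ThInt} and~\ref{ThSet} is the paper's main technical contribution.
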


This theorem has an interesting corollary: Under the conditions of the theorem,
there are ergodic measures $\mu_\beta$ such that $h_{\mu_\beta}(f)=\beta$
for any real number $\beta\in[0, h_\mu(f)]$, i.e. all possible entropies of
ergodic measures form an interval. Since the horseshoe map is a
full shift, these measures can be constructed
by taking sub-shifts or properly assigning weights to different symbols.
Existence of these measures of intermediate entropies exhibits the complicated
structure of the system.

Till now it is not known whether every smooth system ($C^{1+\alpha}$ diffeomorphism)
on a compact manifold with positive (topological) entropy has this property. In general, such a system may not
have a hyperbolic measure. Herman constructed a well-known example, which
is a minimal $C^\infty$ diffeomorphism with positive topological entropy \cite{He1}. So in the general case
even a closed invariant subset should not be expected. Fortunately, minimality
may not prevent the system from having measures of intermediate entropies,
which is the case in Herman's example. Herman suggested the question whether
for every smooth system positive topological entropy violates unique ergodicity. Katok conjectured
more ambitiously: these systems must have measures of arbitrary intermediate entropies.
 We are working towards this goal.

In this paper we deal with skew product maps with nonzero Lyapunov exponents
along fibers. Let $F=(g,f_x)$  be a skew product map
on the space $X\times Y$ preserving an ergodic measure $\mu=\int\sigma_x d\nu$.
We assume that $g$ is an invertible (mod 0) measure preserving transformation on the probability
space $(X,\nu)$. Then $g$ is ergodic.
For every $x\in X$, $f_x$
is a $C^{1+\alpha}$ diffeomorphism on the compact Riemannian manifold $Y$, sending
$\sigma_x$ to $\sigma_{g(x)}$.

\begin{main}\label{ThMain}
Assume that $h_\mu(F)>0$ and $h_\nu(g)=0$. If for almost every $z=(x,y)\in
X\times Y$ and every $v\in T_y(\{x\}\times Y)\backslash\{0\}$, the Lyapunov
exponent $$\lambda(v)=\lim_{n\to\infty}\frac{\log\|df_{g^{n-1}(x)}\cdots
df_{g(x)}df_x v\|}{n}\neq
0,$$
then $F$ has ergodic invariant measures of arbitrary intermediate entropies.
\end{main}

\begin{remark}
Our proof also works when $h_\nu(g)>0$. In this case it concludes that $F$ has ergodic measures of arbitrary entropies between $h_\nu(g)$ and $h_\mu(F)$. \end{remark}
In addition, we
assume $g$ has no periodic point. Otherwise the problem is reduced to Theorem
\ref{ThKa}. We have shown in \cite{Sun} that under the conditions there are
measures of zero entropy. Some lemmas are generalized and adapted in this
paper.

{\flushleft\emph{Acknowledgements}.} The author would like to thank Anatole Katok for numerous discussions and encouragement.

\section{Entropy and Separated Sets on Fibers}

In this section we discuss the entropy of the skew product and  obtain an estimate on the cardinality of the
$(m,\epsilon)$-separated set on each fiber, which is analogous to the definition
of metric entropy by Katok \cite{Ka1}. 

Let $\eta$ be a measurable partition of the fiber $Y$ with finite entropy $H_x(\eta)<\infty$ for almost every $x\in X$,
where $H_x(\eta)=-\sum_{\mathcal{C}\in\eta}\sigma_x(\mathcal{C})\log\sigma_x(\mathcal{C})$.
Let us put
$$\eta_x^n=\bigvee_{k=0}^{n-1}f_x^{-1}f_{g(x)}^{-1}\cdots f_{g^{k-1}(x)}^{-1}\eta$$

\begin{thm}\label{ThAR}\textnormal{(Abramov and Rohlin, \cite{AR})}
For every $\eta$, put
$$h^g(f,\eta)=\lim_{n\to\infty}\frac1n\int_X H_x(\eta_x^n)d\nu$$
The limit exists and it is finite. Let
$$h^g(f)=\sup_\eta
h^g(f,\eta)$$
$h^g(f)$ is called the fiber entropy. We have
$$h_\mu(F)=h_\nu(g)+h^g(f)$$
\end{thm}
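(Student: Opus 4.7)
The plan breaks into three parts: (1) existence and finiteness of the defining limit, (2) the lower bound $h_\mu(F)\ge h_\nu(g)+h^g(f)$, and (3) the matching upper bound.

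For (1), I set $a_n=\int_X H_x(\eta_x^n)\,d\nu$ and show $\{a_n\}$ is subadditive. The key decomposition is
$$\eta_x^{m+n}=\eta_x^m\vee\bigl(f_{g^{m-1}(x)}\circ\cdots\circ f_x\bigr)^{-1}\eta_{g^m(x)}^n.$$
Subadditivity of static entropy on refinements gives $H_x(\eta_x^{m+n})\le H_x(\eta_x^m)+H_x\bigl((f_{g^{m-1}(x)}\circ\cdots\circ f_x)^{-1}\eta_{g^m(x)}^n\bigr)$, and since this composition pushes $\sigma_x$ to $\sigma_{g^m(x)}$, the second term equals $H_{g^m(x)}(\eta_{g^m(x)}^n)$. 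Integrating and using $g$-invariance of $\nu$ gives $a_{m+n}\le a_m+a_n$. Fekete's lemma produces the limit, and finiteness follows from $a_n\le n\int_X H_x(\eta)\,d\nu<\infty$.

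For (2), I work with product partitions $\xi\times\eta$, where $\xi$ is a finite partition of $X$ and $\eta$ is a measurable partition of $Y$ with integrable fiber entropy. Write $\zeta_n=\bigvee_{k=0}^{n-1}F^{-k}(\xi\times\eta)$ and $\alpha_n=\bigvee_{k=0}^{n-1}g^{-k}\xi$. The structural observation is that $\pi^{-1}\alpha_n$ is coarser than $\zeta_n$ (where $\pi\colon X\times Y\to X$ is projection), and the restriction of $\zeta_n$ to the fiber over $x$ is precisely $\eta_x^n$. Applying the chain rule and Rohlin's description of conditional entropy via disintegration,
$$H_\mu(\zeta_n)=H_\nu(\alpha_n)+H_\mu(\zeta_n\mid\pi^{-1}\alpha_n)\ge H_\nu(\alpha_n)+H_\mu(\zeta_n\mid\pi^{-1}\mathcal{B}_X)=H_\nu(\alpha_n)+\int_X H_x(\eta_x^n)\,d\nu,$$
by monotonicity of conditional entropy under refinement of the conditioning $\sigma$-algebra. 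Dividing by $n$, sending $n\to\infty$, and taking suprema over $\xi$ and $\eta$ yields $h_\mu(F)\ge h_\nu(g)+h^g(f)$.

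The main obstacle is the matching upper bound, since an arbitrary finite generating partition $\zeta$ on $X\times Y$ need not have product form. My plan is: given $\zeta$ with $h_\mu(F,\zeta)$ close to $h_\mu(F)$, refine $\zeta$ by a product $\xi\times\eta$ so that $H_\mu(\zeta\mid\xi\times\eta)$ is small; this is possible by choosing $\xi$ fine on $X$ (separating the $x$-dependence of the fiber traces of $\zeta$) and $\eta$ fine on $Y$, using Rohlin's measurable-partition machinery applied to the disintegration $\mu=\int\sigma_x\,d\nu$. Then one shows that in the chain-rule decomposition above, the excess $H_\mu(\zeta_n\mid\pi^{-1}\alpha_n)-\int H_x(\eta_x^n)\,d\nu$ grows sublinearly in $n$ once $\xi$ is fine enough that $\pi^{-1}\alpha_n$ approximates $\pi^{-1}\mathcal{B}_X$ on $\zeta_n$ in conditional entropy. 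Combining with $h_\mu(F,\zeta)\le h_\mu(F,\xi\times\eta)+H_\mu(\zeta\mid\xi\times\eta)$ and letting the approximation error tend to zero produces $h_\mu(F)\le h_\nu(g)+h^g(f)$. The technical heart is the measurable selection of fiber partitions making $x\mapsto H_x(\eta_x^n)$ genuinely integrable and compatible with the conditional-entropy calculus; this is precisely where the hypothesis $H_x(\eta)<\infty$ for a.e.\ $x$ and Rohlin's disintegration theorem enter decisively.
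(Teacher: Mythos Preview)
The paper does not prove this theorem; it is stated as a known result attributed to Abramov and Rohlin \cite{AR}, with no argument supplied. So there is no ``paper's own proof'' to compare against.

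On the substance of your sketch: parts (1) and (2) are correct and standard. The subadditivity argument for the limit is exactly the right one, and your lower bound via the chain rule and monotonicity of conditional entropy under refinement of the conditioning $\sigma$-algebra is clean. Part (3) is where the real work lies, and your outline is honest about that but remains a plan rather than a proof. The assertion that one can refine an arbitrary finite partition $\zeta$ by a product partition $\xi\times\eta$ with $H_\mu(\zeta\mid\xi\times\eta)$ small, and then control the excess $H_\mu(\zeta_n\mid\pi^{-1}\alpha_n)-\int H_x(\eta_x^n)\,d\nu$ sublinearly, needs genuine justification: you must show that the fiberwise partition $\eta$ can be chosen measurably in $x$ while still being a fixed partition of $Y$ (or else redefine $h^g(f)$ accordingly), and that the approximation of $\pi^{-1}\mathcal{B}_X$ by $\pi^{-1}\alpha_n$ interacts correctly with the growing partitions $\zeta_n$. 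The original Abramov--Rohlin argument handles this by working directly with conditional entropies relative to the factor $\sigma$-algebra $\pi^{-1}\mathcal{B}_X$ from the outset, which sidesteps the approximation step you propose; you may find it cleaner to consult \cite{AR} directly rather than reconstruct the upper bound from scratch.
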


For the skew product map we consider, $g$ is ergodic and,
for almost every $x\in X$, $\sigma_x\circ f_x^{-1}=\sigma_{g(x)}$. we have for almost every
$x$,
$$h^g_x(f,\eta)=\lim_{n\to\infty}\frac1n H(\eta_x^n)=h^g(f,\eta)$$

The following  is a version of Shannon-McMillan-Breiman Theorem for skew
product maps.

\begin{thm}\textnormal{(Belinskaja, \cite{Be})}\label{ThKL}
Let $\mathcal{C}_x^n(y)$ be the element of $\eta_x^n$ containing $y$.
For almost every $(x,y)\in X\times Y$,
$$\lim_{n\to\infty}-\frac 1n\sigma_x(\mathcal{C}_x^n(y))=h^g_x(f,\eta)=h^g(f,\eta)$$
\end{thm}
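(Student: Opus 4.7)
The plan is to recognize this as a conditional (or relative) Shannon--McMillan--Breiman theorem and to adapt the classical proof by conditioning on the fiber. First, I would lift $\eta$ to the partition $\tilde\eta=\{X\times E:E\in\eta\}$ of $X\times Y$, and introduce the measurable partition $\alpha$ whose atoms are the fibers $\{x\}\times Y$. Since $F$ maps $\{x\}\times Y$ onto $\{g(x)\}\times Y$, one has $F^{-1}\alpha=\alpha$, and the Rokhlin conditional measures of $\mu$ with respect to $\alpha$ are precisely the $\sigma_x$. Consequently, writing $\tilde\eta^n:=\bigvee_{k=0}^{n-1}F^{-k}\tilde\eta$, the fiber atom $\mathcal{C}_x^n(y)$ is exactly the trace on $\{x\}\times Y$ of the atom of $\tilde\eta^n$ through $(x,y)$, so that $-\log\sigma_x(\mathcal{C}_x^n(y))=I_\mu(\tilde\eta^n\,|\,\alpha)(x,y)$.

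Second, I would iterate the chain rule for conditional information, using the $F$-invariance of both $\mu$ and $\alpha$:
\[ I_\mu(\tilde\eta^n\,|\,\alpha)(p) \;=\; \sum_{k=0}^{n-1}\phi_{n-1-k}\bigl(F^k p\bigr),\qquad \phi_m:=I_\mu\bigl(\tilde\eta\,|\,F^{-1}\tilde\eta^{m}\vee\alpha\bigr). \]
As $m\to\infty$, the conditioning algebras $F^{-1}\tilde\eta^m\vee\sigma(\alpha)$ increase to the ``future-plus-fiber'' $\sigma$-algebra $\mathcal{F}^+:=\sigma\bigl(F^{-j}\tilde\eta:j\geq 1\bigr)\vee\sigma(\alpha)$, so Doob's increasing martingale convergence theorem gives $\phi_m\to\phi_\infty:=I_\mu(\tilde\eta\,|\,\mathcal{F}^+)$ both $\mu$-a.e.\ and in $L^1$. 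Passing to the $L^1$ limit in $\tfrac{1}{n}H_\mu(\tilde\eta^n\,|\,\alpha)=\tfrac{1}{n}\int H_x(\eta_x^n)\,d\nu$ identifies $\int\phi_\infty\,d\mu$ with $h^g(f,\eta)$ as defined in Theorem \ref{ThAR}.

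Third, I would pass to the Birkhoff average. The ergodic theorem for $(F,\mu)$ yields $\tfrac{1}{n}\sum_{k=0}^{n-1}\phi_\infty(F^k p)\to h^g(f,\eta)$ $\mu$-almost everywhere. The main technical obstacle is that the summands in the decomposition above are $\phi_{n-1-k}$, not $\phi_\infty$, so their joint dependence on $n$ and $k$ must be controlled. This is resolved by the standard maximal-function argument from the classical SMB proof: the dominator $\phi^{*}:=\sup_{m\geq 0}\phi_m$ is integrable (by Chung's maximal inequality for conditional information), and therefore Maker's dominated generalization of Birkhoff's theorem applies and shows that $\tfrac{1}{n}\sum_{k=0}^{n-1}\phi_{n-1-k}(F^k p)$ has the same almost-sure limit as $\tfrac{1}{n}\sum_{k=0}^{n-1}\phi_\infty(F^k p)$. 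Combining the three steps yields the asserted pointwise convergence of $-\tfrac{1}{n}\log\sigma_x(\mathcal{C}_x^n(y))$ to $h^g(f,\eta)$; the intermediate equality $h_x^g(f,\eta)=h^g(f,\eta)$ for $\nu$-a.e.\ $x$, already noted in the text preceding the theorem, follows separately from the ergodicity of $g$ applied to the $g$-invariant function $x\mapsto h_x^g(f,\eta)$.
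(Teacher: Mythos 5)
The paper does not prove this theorem: it is Belinskaja's relative Shannon--McMillan--Breiman theorem for skew products, cited from \cite{Be} without proof (and the displayed formula in the paper has a typo: it should read $-\frac1n\log\sigma_x(\mathcal{C}_x^n(y))$). So there is no in-paper argument to compare against; your task amounts to reconstructing a valid proof of a cited result.

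Your reconstruction is the standard Breiman--Chung proof of SMB relativized to the fiber $\sigma$-algebra $\sigma(\alpha)$, and the skeleton is sound. The identification $-\log\sigma_x(\mathcal{C}_x^n(y)) = I_\mu(\tilde\eta^n\,|\,\alpha)(x,y)$ via Rokhlin disintegration is correct and rests on $F^{-1}\alpha=\alpha$, which holds because $F$ permutes fibers. The telescoping of conditional information using $F$-invariance of both $\mu$ and $\alpha$ yields $\sum_{k=0}^{n-1}\phi_{n-1-k}\circ F^k$ with $\phi_m=I_\mu(\tilde\eta\,|\,F^{-1}\tilde\eta^m\vee\alpha)$ exactly as you write; martingale convergence of conditional information, Chung's maximal lemma giving $\phi^*=\sup_m\phi_m\in L^1$, and Maker's dominated ergodic theorem then close the argument, with ergodicity of $\mu$ under $F$ (a standing hypothesis of the paper) forcing the Birkhoff limit to be the constant $h^g(f,\eta)$. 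Two small points to pin down if you write this out in full: (i) Chung's lemma in the relative setting requires the integrability $\int_X H_x(\eta)\,d\nu<\infty$, which is the natural strengthening of the paper's ``$H_x(\eta)<\infty$ a.e.'' and should be made explicit; (ii) $\eta$ must be (mod $0$) countable for the information calculus to apply, which is implicit once the fiber entropy is integrable. Neither is an obstruction. Your closing remark that $h_x^g(f,\eta)=h^g(f,\eta)$ a.e.\ follows from $g$-invariance of $x\mapsto h_x^g(f,\eta)$ and ergodicity of $g$ matches the paper's remark preceding the theorem.
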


Fix $d$ a Riemannian metric on $Y$. Let $d_{n}^F(z,z')$ be an increasing system of metrics defined for $z,z'$ on
the same fiber 
$\{x\}\times
Y$ by:
$$d_{n}^F(z,z')=\max_{0\le i\le n-1}d(F^i z, F^i z')$$
For $x\in X$ and $\delta>0$, on the fiber $\{x\}\times Y,$ let $\mathcal{N}_x^F(n,\epsilon,\delta)$ be the minimal number
of $\epsilon$-balls in the $d_n^F$-metric needed to cover a set of $\sigma_x$-measure
at least $1-\delta$, and let $\mathcal{S}_x^F(n,\epsilon,\delta)$ be the
 maximal number of $(d_n^F,\epsilon)$-separated points we can find inside every set
of $\sigma_x$-measure at least $1-\delta$.

We can follow exactly Katok's argument for \cite[Theorem 1.1]{Ka1} and obtain
the analogous result:

\begin{thm}\label{ThSe}
If $F$ is ergodic, then for almost every $x\in X$ and every $\delta>0$,
$$h^g(f)=\lim_{\epsilon\to 0}\liminf_{n\to\infty}\frac{\log\mathcal{N}_x^F(n,\epsilon,\delta)}{n}=
\lim_{\epsilon\to 0}\limsup_{n\to\infty}\frac{\log\mathcal{N}_x^F(n,\epsilon,\delta)}{n}$$
And
$$h^g(f)=\lim_{\epsilon\to 0}\liminf_{n\to\infty}\frac{\log\mathcal{S}_x^F(n,\epsilon,\delta)}{n}=
\lim_{\epsilon\to 0}\limsup_{n\to\infty}\frac{\log\mathcal{S}_x^F(n,\epsilon,\delta)}{n}$$
\end{thm}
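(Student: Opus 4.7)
The plan is to adapt Katok's proof of \cite[Theorem~1.1]{Ka1} essentially line for line to the skew-product setting, using the fibrewise $(d_n^F,\epsilon)$-balls in place of ordinary Bowen balls and invoking Theorem~\ref{ThKL} (the fibrewise Shannon-McMillan-Breiman theorem) in place of the classical one. The elementary inequality $\mathcal{N}_x^F(n,\epsilon,\delta) \leq \mathcal{S}_x^F(n,\epsilon,\delta) \leq \mathcal{N}_x^F(n,\epsilon/2,\delta)$ makes the two halves of the statement equivalent, so I restrict attention to $\mathcal{N}_x^F$ and aim to establish matching upper and lower bounds of the form $h^g(f)\pm O(\gamma)$, valid for $\nu$-a.e.~$x$, every $\delta>0$, every $\epsilon$ small enough, and every $n$ large enough.

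The upper bound is straightforward. Fix $\gamma>0$ and choose a finite measurable partition $\eta$ of $Y$ with $\operatorname{diam}(\eta)<\epsilon$. Every atom of $\eta_x^n$ has $d_n^F$-diameter below $\epsilon$, hence sits inside a single $(d_n^F,\epsilon)$-ball. Theorem~\ref{ThKL} then supplies, for $\nu$-a.e.~$x$ and $n$ large, a set of $\sigma_x$-measure at least $1-\delta$ covered by at most $e^{n(h^g(f,\eta)+\gamma)}$ atoms of $\eta_x^n$, so $\mathcal{N}_x^F(n,\epsilon,\delta) \leq e^{n(h^g(f,\eta)+\gamma)}$. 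Refining $\eta$ and sending $\gamma\to 0$ yields $\lim_{\epsilon\to 0}\limsup_n \tfrac{1}{n}\log\mathcal{N}_x^F(n,\epsilon,\delta) \leq h^g(f)$.

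For the lower bound I would fix $\gamma>0$, pick a partition $\eta$ with $h^g(f,\eta)>h^g(f)-\gamma$ whose boundary has $\sigma_x$-measure zero for $\nu$-a.e.~$x$, and shrink $\epsilon$ until the $2\epsilon$-neighbourhood $U$ of $\partial\eta$ satisfies $\mu(X\times U)<\gamma$. Ergodicity of $F$ together with Birkhoff's theorem then produces a set $G_n$ of $\sigma_x$-measure at least $1-\delta/2$ on which simultaneously $\sigma_x(\mathcal{C}_x^n(y))\le e^{-n(h^g(f,\eta)-\gamma)}$ (via Theorem~\ref{ThKL}) and the $F$-orbit of $(x,y)$ visits $X\times U$ with asymptotic frequency less than $2\gamma$ up to time $n$; consequently any fibrewise Bowen $2\epsilon$-ball centered on a point of $G_n$ meets at most $M^{2\gamma n}$ atoms of $\eta_x^n$, where $M$ bounds the number of $\eta$-atoms inside any Euclidean $2\epsilon$-ball in $Y$. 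Given a cover of a set of $\sigma_x$-measure at least $1-\delta$ by $(d_n^F,\epsilon)$-balls $B_1,\dots,B_N$, every $\eta_x^n$-atom that meets $G_n$ must meet some $B_i$ that itself meets $G_n$, and therefore lies in a family of at most $M^{2\gamma n}$ atoms obtained by enlarging $B_i$ to a $2\epsilon$-Bowen ball centered at a point of $G_n$; summing atom measures produces
$$1-\tfrac{3\delta}{2}\ \le\ \sigma_x\bigl(G_n\cap\bigcup\nolimits_i B_i\bigr)\ \le\ N\cdot M^{2\gamma n}\cdot e^{-n(h^g(f)-2\gamma)},$$
so that $\liminf_n\tfrac{1}{n}\log\mathcal{N}_x^F(n,\epsilon,\delta)\ge h^g(f)-2\gamma-2\gamma\log M$, and $\gamma\to 0$ closes the gap.

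The main obstacle is this boundary-avoidance step: controlling how many atoms of $\eta_x^n$ a single fibrewise Bowen ball can meet. In the classical single-map case Katok arranges this by choosing $\eta$ with smooth boundary and running a Borel-Cantelli argument on one orbit. In the present setting each iterate $F^k(x,y)$ jumps to a different fibre $\{g^kx\}\times Y$, but since $\eta$ lives only on $Y$ and $F$ is ergodic, the Birkhoff averages of $\chi_{X\times U}$ converge to $\mu(X\times U)$ for $\mu$-a.e.~point, and the atom-counting argument transposes with no new input beyond care in selecting $\eta$ so that $\partial\eta$ is null for $\sigma_{g^kx}$ simultaneously for all $k$---doable by combining a generic choice of $\eta$ on $Y$ with Fubini applied to the disintegration $\mu=\int\sigma_x\,d\nu$.
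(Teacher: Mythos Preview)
Your proposal is correct and follows precisely the approach the paper itself indicates: the paper gives no detailed proof of this theorem, merely stating that ``we can follow exactly Katok's argument for \cite[Theorem~1.1]{Ka1},'' and you have spelled out how that adaptation proceeds using Theorem~\ref{ThKL} in place of the classical Shannon--McMillan--Breiman theorem. Your identification of the boundary-avoidance step as the only point requiring care, and your handling of it via Birkhoff for the ergodic system $(F,\mu)$ together with Fubini on the disintegration $\mu=\int\sigma_x\,d\nu$, is exactly the right fibrewise translation of Katok's original argument.
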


\section{Recurrence}

In this section we discuss some properties related to nontrivial recurrence
of the map. 

Theorem \ref{ThInt} is a generalization of \cite[Proposition 3.2]{Sun}. It
says that, for a subset of positive measure, if the conditional measures are uniformly bounded from below, then on
each fiber we can find points that return relatively faster, such that the
return time is integrable. In this paper we still use the special version
for the first return.

 Theorem \ref{ThSet} is crucial in the proof of the main theorem. For a complicated or even randomly selected return map on some set of positive measure,
if the return time is integrable, then this return map is one-to-one and
measure preserving on a smaller subset of positive measure.
We call this subset the \emph{kernel} of the return map. Moreover, we have an estimate of the integral of the return time, which may be used to estimate the size (measure) of the
kernel.

\begin{thm}\label{ThInt}\textnormal{(Integrability of Return Time)}
Let $P\subset X\times Y$ be a measurable subset. $B=\pi(P)\subset X$ is the
projection of $P$ on the base. For $x\in B$, denote $P\cap (\{x\}\times Y)$
by $P(x)$. Assume that $\nu(B)=\nu_{0}>0$ and there is $\sigma_0>0$ such that for
(almost) every $x\in B$,
$\sigma_x(P(x))>\sigma_0$. Hence $\mu(P)=\mu_0>\nu_0\sigma_0>0$. For (almost)
every $z\in P$, denote by $n_l(z)$ the $l$-th return time
of $z$. Let 
$$P_n^l(x)=\{z\in P(x)|\ n_l(z)\ge n\},\;
N_l(x)=\max\{n|\sigma_x(P_n^l(x))> \sigma_0\}$$
Then
$$\int_B N_l(x)d\nu< \frac{l}{\sigma_0}<\infty$$
\end{thm}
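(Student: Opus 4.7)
The plan is a clean application of Kac's lemma to the ergodic system $(F,\mu)$: first bound $\sigma_0\,N_l(x)$ by $\int_{P(x)} n_l\,d\sigma_x$ on each fiber, then integrate over $B$ and compute the resulting global quantity. For fixed $x\in B$, the layer-cake identity for the integer-valued function $n_l$ gives $\int_{P(x)} n_l\,d\sigma_x = \sum_{n\ge 1}\sigma_x(P_n^l(x))$. By the definition of $N_l(x)$, each of the first $N_l(x)$ summands strictly exceeds $\sigma_0$, so
$$\sigma_0\, N_l(x) \;<\; \sum_{n=1}^{N_l(x)}\sigma_x(P_n^l(x)) \;\le\; \int_{P(x)} n_l(z)\,d\sigma_x(z).$$
Since $N_l(x)\ge 1$ for almost every $x\in B$ (because $P_1^l(x)=P(x)$ and $\sigma_x(P(x))>\sigma_0$), the strict inequality survives integration against $\nu$, and the disintegration $\mu=\int\sigma_x\,d\nu$ collapses the right-hand side to a single integral over $P$:
$$\sigma_0\int_B N_l(x)\,d\nu \;<\; \int_P n_l(z)\,d\mu(z).$$

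To handle the right-hand side I invoke Kac's lemma. Since $F$ is ergodic with respect to $\mu$ and $\mu(P)=\mu_0>0$, the first return map $R:P\to P$ is defined almost everywhere, preserves $\mu|_P$, and satisfies $\int_P n_1\,d\mu = 1$. The $l$-th return time decomposes as $n_l=\sum_{k=0}^{l-1}n_1\circ R^k$, and the $R$-invariance of $\mu|_P$ makes each summand contribute $1$, giving $\int_P n_l\,d\mu = l$. Dividing by $\sigma_0$ yields the desired bound $\int_B N_l(x)\,d\nu < l/\sigma_0$.

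I expect no substantial obstacle beyond routine measure-theoretic bookkeeping: one must verify the measurability of $x\mapsto N_l(x)$, which follows from the measurability of $x\mapsto \sigma_x(A)$ for measurable product-set sections $A$, and justify the Fubini/disintegration step. The essential conceptual ingredient is Kac's lemma, and its applicability rests entirely on the ergodicity of $(F,\mu)$ assumed throughout the paper; without ergodicity, one would only obtain $\int_P n_1\,d\mu \le 1$, and the argument would yield the same inequality (still sufficient for the theorem as stated).
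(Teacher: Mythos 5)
Your proposal is correct and follows essentially the same route as the paper: both arguments bound $\int_P n_l\,d\mu$ by $l$ via the additive decomposition of $n_l$ under the first-return map (the Kac-type identity $\int_P n_1\,d\mu=\mu\bigl(\bigcup_{j\ge0}F^j(P)\bigr)\le 1$), and both use the layer-cake representation together with Fubini/disintegration to compare $\sigma_0 N_l(x)$ with the fiber integral of $n_l$. The only cosmetic difference is that you integrate fiber by fiber and then over $B$, while the paper sums $\mu(P_j)>\sigma_0\,\nu(B^j)$ over $j$; and as you yourself note, ergodicity is not actually needed, which is why the paper invokes only the inequality $\int_P n_1\,d\mu\le 1$ rather than Kac's equality.
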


\begin{remark}
$N_l(x)$ is the longest return time for the $l$-th returns of
the points in a subset of conditional measure no less than $\sigma_x(P(x))-\sigma_0$
in $P(x)$. Or equivalently, $N_l(x)$ is the smallest number such that the
set of points in $P(x)$ with $l$-th return times greater than $N_l(x)$ has
conditional measure at most $\sigma_0$.
\end{remark}

\begin{proof}
Since $\mu$ is $F$-invariant and $\mu(P)>0$, we have:
$$0<\int_{P} n_1(z)d\mu=\mu(\bigcup_{j=0}^\infty F^j(P))\le1$$
Let $F_P$ be the first return map on $P$, which preserves $\mu$, then for
each $k$,
$$\int_Pn_{k+1}(z)d\mu=\int_P(n_k(F_P(z))+n_1(z))d\mu=\int_Pn_k(z)d\mu+\int_Pn_1(z)d\mu$$
Hence
$$0<\int_{P} n_l(z)d\mu=l\int_{P} n_1(z)d\mu\le l$$
Note
$$\int_{P} n_l(z)d\mu=\sum^\infty_{j=1}\mu(P_j)$$
where $P_j=\{z\in P| n_l(z)\ge j\}$ consists of points with $l$-th return
time no less than $j$. Note $P_j^l(x)=P_j\cap P(x)$.

For every $x\in B$, let $B^j=\{x\in B|N_l(x)\ge j\}$. 
By definition, for
every $j\le N_l(x)$, $\sigma_x(P_j^l(x))>\sigma_0$.
So $x\in B^j$ iff $\sigma_x(P_j^l(x))=\sigma_x(P_j\cap
P(x))>\sigma_0$. We have $$\mu(P_j)=\int_B\sigma_x(P_j\cap P(x))d\nu>\int_{B^j}\sigma_0d\nu=\nu(B^j)\cdot\sigma_0$$
hence
$$\int_{B}N_l(x)d\nu=\sum^\infty_{j=1}\nu(B^j)<\sum^\infty_{j=1}\frac{1}{\sigma_0}\mu(P_j)\le\frac{l}{\sigma_0}$$
\end{proof}

\begin{thm}\label{ThSet}
Let $g$ be a measure preserving transformation on a probability space $(X,\nu)$.
$g$ is invertible and has no periodic point. $B$ is a subset of $X$ with $\nu(B)=\nu_0>0$. $N:B\to\mathbb{N}$ is a measurable
function such that $\tilde g(x):=g^{N(x)}(x)\in B$ for almost every $x\in
B$. Assume 
\begin{equation}\label{integral}
\int_B
N(x)d\nu=\Sigma_B<\infty
\end{equation} Then there is a subset $B'$ of $B$ such that
the following holds:
\begin{enumerate}
\item
$\nu(B')=\nu_1>0$, $\tilde g(B')=B'$ and $g_*=\tilde g|_{B'}$ is invertible
and $\nu$-preserving.
\item
$$\int_{B'}N(x)d\nu\ge \nu(\bigcup_{j=-\infty}^\infty g^j(B))$$
\end{enumerate}

We call $B'$ the kernel of $\tilde g$.
\end{thm}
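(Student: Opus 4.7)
The plan is to encode the question as a Kakutani skyscraper over $B$ and then extract $B'$ as the base of a well-chosen $\hat g$-invariant subtower.

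Let $\hat Y=\{(x,k)\in B\times\mathbb Z_{\ge 0}:0\le k<N(x)\}$, equipped with the measure $\hat\nu=\nu|_B\otimes(\text{counting})$ of total mass $\Sigma_B$. The skyscraper map $\hat g$ sends $(x,k)$ to $(x,k+1)$ when $k+1<N(x)$ and to $(\tilde g(x),0)$ when $k+1=N(x)$, and the projection $\pi(x,k)=g^k(x)$ intertwines $\hat g$ with $g$. A short argument---using that for the least $k\ge 0$ with $g^{-k}(y)\in B$ one must have $N(g^{-k}(y))>k$, since otherwise $\tilde g(g^{-k}(y))=g^{N-k}(y)\in B$ would supply a smaller admissible index---shows $\pi(\hat Y)=U:=\bigcup_{j\in\mathbb Z}g^j(B)$.

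Next I would select an $\hat g$-invariant measurable subset $\hat Y'\subset\hat Y$ on which $\pi$ is injective onto $U$. Aperiodicity of $g$ lets me identify each $g$-orbit $O$ with $\mathbb Z$; along $O$, $B\cap O$ is a subset of $\mathbb Z$ and $\tilde g$ acts as the jump $t\mapsto t+N(t)$. I would choose, orbit by orbit, a bi-infinite $\tilde g$-chain $S_O\subset B\cap O$ whose half-open intervals $\{[t,t+N(t)):t\in S_O\}$ partition $\mathbb Z$---forward extension is automatic from $N\ge 1$, and backward extension is enabled by $\pi(\hat Y)=U$, which guarantees that each prospective left endpoint has a valid predecessor---and globalize via a measurable selection. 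Setting $B':=\{x\in B:(x,0)\in\hat Y'\}$, condition (1) holds because the tiling forces $\tilde g(B')=B'$ and injectivity of $\tilde g|_{B'}$, while $\pi|_{\hat Y'}$ measurably conjugates $\hat g|_{\hat Y'}$ to $g|_U$; accordingly $\tilde g|_{B'}$---being the first return of $\hat g$ to the base slice $B'\times\{0\}$---inherits $\nu$-invariance via Poincar\'e. For (2), the same bijection yields $\int_{B'}N\,d\nu=\hat\nu(\hat Y')=\nu(U)$, with equality when a clean tiling exists and the inequality $\ge$ if on a null set of orbits one has to relax to a cover with overlap.

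The main obstacle is the orbit-wise backward extension of $S_O$ together with its measurable globalization. Aperiodicity of $g$ rules out the cyclic wrap-around that would obstruct tileability, and the integrability $\Sigma_B<\infty$ supplies Kac-type control so that backward chains exist $\nu$-almost everywhere; but the passage from the orbit-by-orbit prescription to a measurable $\hat Y'$ requires care, either through a Kuratowski--Ryll-Nardzewski selection or through a direct construction adapted to the measurable equivalence relation cut out by $g$-orbits. Positivity $\nu(B')>0$ is then immediate from $\pi(\hat Y')=U$ together with $\nu(U)\ge\nu_0>0$.
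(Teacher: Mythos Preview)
Your skyscraper framework is a legitimate and attractive repackaging, and \emph{once} a bi-infinite $\tilde g$-chain $S_O$ tiling $\mathbb Z$ exists on almost every orbit, your derivation of both conclusions is cleaner than the paper's: the conjugacy $\pi|_{\hat Y'}:\hat Y'\to U$ immediately gives invertibility and $\nu$-preservation of $\tilde g|_{B'}$ and the identity $\int_{B'}N\,d\nu=\nu(U)$. The paper's route to $B'$ is quite different: it works inside $B$ via the first-return map $g_B$, introduces a partial order and an equivalence relation (eventual merging of forward $\tilde g$-orbits), defines $\tilde B=\bigcap_{j\ge1}\tilde g^j(B)$ and a ``minimal predecessor'' map $g'$ on $\tilde B$, and sets $B'=\bigcap_{j\ge1}(g')^j(\tilde B)$; nonemptiness and the inclusion $G(x)\subset B'$ come from two lemmas imported from~\cite{Sun}. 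Part~(2) in the paper is then proved by exactly your skyscraper computation, so on that half the two approaches coincide.

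The genuine gap is precisely where you flag it, and your proposed justification does not close it. The claim that $\pi(\hat Y)=U$ ``guarantees that each prospective left endpoint has a valid predecessor'' is false if ``predecessor'' means $\tilde g$-preimage: $\pi(\hat Y)=U$ only says $t_0-1$ lies in some interval $[s,s+N(s))$, i.e.\ $s+N(s)\ge t_0$, not $s+N(s)=t_0$. If you accept overlap you no longer have a $\tilde g$-chain (then $\tilde g(s)$ need not lie in $S_O$), so $\tilde g(B')=B'$ fails and part~(1) collapses; your parenthetical ``inequality $\ge$ with overlap on a null set of orbits'' does not help, because overlap can occur on a set of positive measure. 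Concretely, with $B=X$, $g$ an irrational rotation by small $\alpha$, and $N=2$ on $[0,\tfrac12)$, $N=1$ on $[\tfrac12,1)$, one checks $\tilde g(X)$ misses $[\alpha,2\alpha)$, so points there have no $\tilde g$-preimage and your backward step cannot start from them. What \emph{is} true---and what the paper establishes through the sets $H(x)$, $\tilde B$, and the map $g'$---is that on almost every orbit a bi-infinite $\tilde g$-chain exists; but this requires real work beyond surjectivity of $\pi$, and ``Kac-type control'' is not a substitute. In the paper, integrability of $N$ is used only to guarantee each fiber $\tilde g^{-1}(x)$ is finite, so that the minimal predecessor $g'$ is well defined. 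To salvage your approach you would need a concrete orbit-wise construction of $S_O$ (for instance a diagonal/compactness limit of the forward tilings started at $s_n\to-\infty$, after showing such limits stabilize) together with a proof of measurability; a bare appeal to Kuratowski--Ryll-Nardzewski does nothing until you have exhibited the multifunction whose selections you want.
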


\begin{remark}
This theorem is nontrivial because the map $\tilde g$ is not necessarily
the first return, but just some return.  $\tilde g$ is just a measurable
transformation on $B$ which may be neither injective nor surjective. However,
we are able to find a subset of $B$ on which it is invertible, provided
integrability of return time.
\end{remark}

\begin{remark}
The assumption that $g$ has no periodic point is not necessary in this theorem.
Periodic orbits may be removed as a null set or we can easily find a subset
consisting
of periodic orbits on which $\tilde g$ is invertible.
\end{remark}

\begin{remark}
In particular, $B$ may coincide with $X$ in the theorem. This is an interesting corollary.
\end{remark}

\begin{proof}
With possible loss of a null set we may assume that the first return map
$g_B$ on $B$ is defined everywhere and invertible. $g_B$ has no periodic
point since $g$ has not. As $\tilde g$ is some return map and $N(x)$ is measurable,
there is a measurable
function $n(x)$ such that $\tilde g(x)=g^{N(x)}(x)=g_B^{n(x)}(x)$.

Define a partial order on $B$: $x_1\prec x_2$ iff there
is $n\geq 0$ such that $g_B^n(x_1)=x_2$, i.e. $x_2$ is an image of $x_1$ under
iterates of $g_B$ (and $g$). Since $g_B$ is invertible and has no periodic point, this partial
order is well defined.

Let $O^+(x)=\{\tilde g^k(x)|k\in\mathbb{N}\cup\{0\}\}$ be the forward $\tilde
g$-orbit of $x$. We define an equivalence relation on $B$: $x_1\sim x_2$ iff $Q(x_1,x_2):=O^+(x_1)\cap O^+(x_2)\neq\emptyset$, i.e.
there are $k_1, k_2>0$ such that $\tilde g^{k_1}(x_1)=\tilde g^{k_2}(x_2)$.
Note $x_1\in O^+(x_2)$ or $x_2\in O^+(x_1)$ implies $x_1\sim x_2$, but the converse is not true. Also note within an equivalence class the partial order
 is a total order,
since $g_B$ is invertible and $\tilde g^{k_1}(x_1)=\tilde g^{k_2}(x_2)=x_3$ implies $g_B^{n_1}(x_1)=g_B^{n_2}(x_2)$
for some $n_1$ and $n_2$. We write $x_1\precsim x_2$ if $x_1\prec x_2$ and
$x_1\sim x_2$.

Here we use some facts we showed in \cite{Sun}:

\begin{lemma}\textnormal{(\cite{Sun}, Proposition 4.1)}
For almost every $x\in B$, there is $x^*\in B$ such that for every $x'\precsim x$, $x^*\in O^+(x')$, i.e.
$$H(x)=\bigcap_{x'\precsim x} O^+(x')\ne\emptyset$$
Moreover, if $x_1\precsim x_2$ then $H(x_1)\supset H(x_2)$.
\end{lemma}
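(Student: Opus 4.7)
My plan is to dispose of the monotonicity clause, which is essentially order-theoretic, and then concentrate on the a.e.\ nonemptiness of $H(x)$, where the integrability hypothesis (\ref{integral}) must do the real work. For monotonicity, suppose $x_1\precsim x_2$ and $x'\precsim x_1$. Then $x'\prec x_1\prec x_2$ chains to $x'\prec x_2$, and transitivity of $\sim$ gives $x'\sim x_2$: given common points witnessing $x'\sim x_1$ and $x_1\sim x_2$, both lie on the same linearly ordered forward $\tilde g$-trajectory out of $x_1$, so one is a $\tilde g$-iterate of the other, producing a common point of $O^+(x')$ and $O^+(x_2)$. Hence $\{x':x'\precsim x_1\}\subseteq\{x':x'\precsim x_2\}$, and intersecting the larger family gives $H(x_2)\subseteq H(x_1)$.

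For the main clause, the first step is to rewrite $H(x)$ as a tail of $O^+(x)$. Whenever $x'\precsim x$ we have $x'\sim x$, so $O^+(x')\cap O^+(x)\ne\emptyset$; and because $\tilde g$ is single-valued, two forward orbits that ever meet share all subsequent iterates. Setting
$$e(x',x):=\min\{k\ge 0:\tilde g^k(x)\in O^+(x')\},$$
this yields $O^+(x')\cap O^+(x)=\{\tilde g^k(x):k\ge e(x',x)\}$, and since $H(x)\subseteq O^+(x)$,
$$H(x)=\{\tilde g^k(x):k\ge E(x)\},\qquad E(x):=\sup_{x'\precsim x}e(x',x).$$
Because $\{x':x'\precsim x\}\subseteq\{g_B^{-m}(x):m\ge 0\}$ is countable, $E(x)\in\mathbb{N}\cup\{\infty\}$ is well defined and $H(x)\ne\emptyset\iff E(x)<\infty$.

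It then remains to prove $E(x)<\infty$ for $\nu$-almost every $x$. The plan is to exploit (\ref{integral}): since $n(x)\le N(x)$, the step function $n$ defined by $\tilde g(x)=g_B^{n(x)}(x)$ is $\nu$-integrable on $B$. Encode the forward $\tilde g$-walk inside the $g_B$-orbit via the cumulative offsets $T_k(x)=n(x)+n(\tilde g(x))+\cdots+n(\tilde g^{k-1}(x))$, so that $\tilde g^k(x)=g_B^{T_k(x)}(x)$; the merging index $e(g_B^{-m}(x),x)$ is then the first $k$ at which the offset sequence emanating from $g_B^{-m}(x)$, translated by $-m$, lands on $T_k(x)$ in the $g_B$-orbit. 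Birkhoff's theorem applied to the $\nu$-preserving $g_B$ and the $L^1$ function $n$ gives linear growth of $T_k$ almost everywhere, and I would combine this with the $g_B$-invariance of $\nu$ and a Borel--Cantelli-type bound on the level sets $\{x:E(x)\ge M\}$ to force $\nu(\{E=\infty\})=0$.

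The main obstacle is precisely this last estimate: $e(g_B^{-m}(x),x)$ depends on the full forward $\tilde g$-trajectory starting from $g_B^{-m}(x)$, not just on local data near $x$, so controlling its supremum over all admissible $m\ge 0$ requires a genuinely global $g_B$-invariance argument rather than pointwise Birkhoff data. I expect the cleanest execution to parallel the proof of Theorem~\ref{ThInt}: express the measures of long-merging-time sets as tail sums of measures of waiting-time super-level sets and bound them using (\ref{integral}), thus ruling out a positive-measure set on which the merging times blow up.
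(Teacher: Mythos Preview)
The paper does not prove this lemma; it is quoted from \cite{Sun} (Proposition~4.1) and used without proof inside the argument for Theorem~\ref{ThSet}. So there is no in-paper argument to compare your proposal against.

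On its own merits: your monotonicity argument is correct, and the transitivity of $\sim$ you invoke is exactly the computation the paper alludes to when declaring $\sim$ an equivalence relation. Your reduction of the main clause to the a.e.\ finiteness of
\[
E(x)=\sup_{x'\precsim x}e(x',x)
\]
is also valid and clean---the observation that forward $\tilde g$-orbits which meet once coincide thereafter makes each $O^+(x')\cap O^+(x)$ a tail of $O^+(x)$, so $H(x)$ is itself a tail (or empty), and $H(x)\ne\emptyset\iff E(x)<\infty$.

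The genuine gap is that you do not establish $E(x)<\infty$ a.e.; you flag this yourself as ``the main obstacle'' and only outline a strategy. Birkhoff's theorem for $(g_B,n)$ gives $T_k(x)\sim k\bar n$ for a.e.\ $x$, but $e(g_B^{-m}(x),x)$ is determined by the first time the offset sequence started at $g_B^{-m}(x)$, shifted by $-m$, lands on $\{T_k(x):k\ge 0\}$, and linear growth of both sequences gives no uniform bound on that hitting time as $m$ ranges over the (infinite, by Lemma~\ref{inf}) set of admissible backward indices. Your proposed parallel with Theorem~\ref{ThInt} is suggestive but not carried out: that argument bounds tail sums of a single return-time function via Kac-type identities, whereas here one must control a \emph{supremum} of merging times indexed by an unbounded family of starting points, which is a genuinely different estimate. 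As written, the proposal is a correct reduction followed by a plan, not a proof.
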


\begin{lemma}\textnormal{(\cite{Sun}, Proposition 4.2)}\label{inf}
For almost every $x\in B$, there is a point $x'$ such that $x'\precsim
x$ and $x'\ne x$. Hence for almost every $x\in B$, there are infinitely many
$x'$ such that $x'\precsim x$.
\end{lemma}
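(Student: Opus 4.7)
The plan is to show that the set $M \subset B$ of $\sim$-minima—points $x \in B$ having no $x' \in B$ with $x' \precsim x$ and $x' \neq x$—is $\nu$-null. Once this is done, the first assertion is the complementary statement. For the second, since $g_B$ preserves $\nu|_B$, the set $\bigcup_{k \in \mathbb{Z}} g_B^k(M)$ is also null, so a.e. $g_B$-orbit is entirely disjoint from $M$; hence a.e. $x$'s $\sim$-class has no $\prec$-minimum, is unbounded below in $\prec$, and therefore contains infinitely many distinct $x'$ with $x' \precsim x$.

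To bound $\nu(M)$ I would compare the forward $\tilde{g}$-iterates of $M$. First, the family $\{\tilde{g}^k(M)\}_{k \geq 0}$ is pairwise disjoint in $B$: for $k < l$, any coincidence $\tilde{g}^k(y_1) = \tilde{g}^l(y_2)$ with $y_1, y_2 \in M$ forces $y_1 \sim y_2$ via the common iterate, hence $y_1 = y_2$ because two minima of one $\sim$-class must coincide; then $\tilde{g}^{l - k}$ fixes $\tilde{g}^k(y_1)$, contradicting the absence of $g$-periodic orbits. Second, and this is the heart of the matter, I claim $\nu(\tilde{g}^k(M)) = \nu(M)$ for every $k \geq 0$. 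To establish it I would partition $M$ by total $g$-time, writing $M = \bigsqcup_t M^{k,t}$ with $M^{k,t} = \{y \in M : \sum_{i=0}^{k-1} N(\tilde{g}^i(y)) = t\}$, so that $\tilde{g}^k$ coincides with $g^t$ on $M^{k,t}$. The same $\sim$-minimum uniqueness rules out any overlap $g^{t_1}(y_1) = g^{t_2}(y_2)$ with distinct $t_i$ and $y_i \in M^{k,t_i}$, so the pieces $g^t(M^{k,t})$ are pairwise disjoint in $t$; the $g$-invariance of $\nu$ then gives $\nu(\tilde{g}^k(M)) = \sum_t \nu(M^{k,t}) = \nu(M)$. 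Combined with disjointness across $k$, this yields $\sum_{k \geq 0} \nu(M) \leq \nu(B) \leq 1$, forcing $\nu(M) = 0$.

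The main obstacle I expect is precisely this measure-loss issue for the (generally non-injective, non-$\nu$-preserving) map $\tilde{g}$: a priori its iterates could compress $M$ into thinner images and shrink the measure. The rigidity of $\sim$-minimality rescues the argument by forbidding exactly the collisions that would cause such shrinkage, so the invariance of $\nu$ under the underlying $g$ does the heavy lifting in place of any invariance property of $\tilde{g}$ itself.
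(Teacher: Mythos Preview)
Your argument is correct. The key observation---that two $\sim$-minimal points in the same equivalence class must coincide because $\prec$ is a total order there---is exactly what makes $\tilde g$ injective on the set $M$ of $\sim$-minima, and your level-set decomposition $M=\bigsqcup_t M^{k,t}$ then legitimately transfers the $\nu$-invariance of $g$ to the equality $\nu(\tilde g^k(M))=\nu(M)$. The disjointness of $\{\tilde g^k(M)\}_{k\ge 0}$ follows in the same way, forcing $\nu(M)=0$; your deduction of the ``infinitely many'' clause via absence of a $\prec$-minimum in almost every $\sim$-class is also sound (the class sits inside a single $g_B$-orbit, hence inside a copy of $\mathbb{Z}$).

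As for comparison: the present paper does not give its own proof of this lemma but quotes it from \cite{Sun}, so there is no in-paper argument to set yours against. One incidental remark: your proof never invokes the integrability hypothesis \eqref{integral}; only finiteness of $N$ (guaranteed by $N:B\to\mathbb{N}$) is used to make the partition $M=\bigsqcup_t M^{k,t}$ well defined.
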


Excluding a null set and its (full) $g$-orbit (the union is still a null
set), we can assume the results in the last two lemmas
hold for every $x\in B$.

Let $\tilde B=\bigcap_{j=1}^\infty \tilde g^j(B)$. Then $\tilde B$ is a measurable
subset of $B$, consisting of elements that lie in the forward $\tilde g$-orbits of infinitely
many elements of $B$. For every $x\in B$, let
$$G(x)=\bigcup_{x'\sim x}H(x')$$
Then for every $\tilde x\in G(x)$, there is $x'$ such that
$x'\sim x$ and $\tilde x\in H(x')=\bigcap_{x''\precsim x'} O^+(x'')$. By Lemma \ref{inf},
the intersection is of infinitely many forward $\tilde g$-orbits, which implies $\tilde x\in\tilde B$. So $G(x)\subset \tilde B$ for every
$x\in B$.

For
$ x\in\tilde B$, there are infinitely many elements in $B$ such that their
forward $\tilde g$-orbits pass through $ x$. They also pass through one of
the  elements
in the pre-image $\tilde g^{-1}(x)$. But by integrability of return time
(\ref{integral}), the pre-image $\tilde g^{-1}(
x)$ consists of finite number of elements. So there must be some element
in $\tilde g^{-1}(x)$ which lies in infinitely many forward $\tilde g$-orbits. Such an element belongs to $\tilde B$, hence $\tilde
g^{-1}(x)\cap\tilde B$ is nonempty.
The function
$$N'(x)=\min\{N(\tilde x)|{\tilde g(\tilde x)=x, \tilde x\in\tilde B}\}$$
is a well-defined measurable function on $\tilde B$.

Define $g'(x)=g^{-N'( x)}( x)$. For $x\in\tilde B$ note $g'(x)\in\tilde B$ and hence $g'(\tilde
B)\subset\tilde B$. So $g'$ is a measurable transformation on $\tilde B$.
 We also note that $\tilde g(g'(x))=x$ for every $x\in\tilde B$.

Let $B'=\bigcap_{j=1}^\infty (g')^j(\tilde B)$. Then $B'$ is measurable. On one hand, by definition we have $g'(B')=B'$.
For every $x\in B'$, $g'(x)\in B'$ and $\tilde g(g'(x))=x$.
This implies $\tilde g(B')\supset B'$. On the other hand,
$$\tilde g(B')=\tilde g(\bigcap_{j=1}^\infty (g')^j(\tilde B))\subset\bigcap_{j=1}^\infty
\tilde g((g')^j(\tilde B))=\bigcap_{j=1}^\infty
(g')^{j-1}(\tilde B)=B'$$
So $\tilde g(B')=B'$.

For every $x\in B$ and  every $\tilde x\in G(x)$, $\tilde g(\tilde x)\in G(x)\subset \tilde B$. We
claim

\begin{lemma}
If $\tilde x\in G(x)$, then $g'(\tilde g(\tilde x))=\tilde x$.
\end{lemma}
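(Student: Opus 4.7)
The plan is to unpack the definition of $g'$ and reduce the claim to an ordering statement. Set $y := \tilde g(\tilde x)$. Since $g'(y) = g^{-N'(y)}(y)$ and $N'(y)$ is the minimum of $N(\tilde z)$ over all $\tilde z \in \tilde B$ with $\tilde g(\tilde z) = y$, the identity $g'(\tilde g(\tilde x)) = \tilde x$ is equivalent to saying that $\tilde x$ itself realizes this minimum. If a second point $\tilde z \in B$ also satisfies $\tilde g(\tilde z) = y$, then writing both $\tilde g$-jumps as powers of $g$ shows that $\tilde z$ and $\tilde x$ lie on a common $g$-orbit, and that on this orbit the pre-image with smaller $N$-value is exactly the one occurring later in the $g_B$-order. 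So the claim reduces to: every $\tilde z \in \tilde B$ with $\tilde g(\tilde z) = y$ satisfies $\tilde z \precsim \tilde x$.

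Fix $x'' \sim x$ with $\tilde x \in H(x'')$, and suppose for contradiction that some such $\tilde z$ satisfies $\tilde x \prec \tilde z$ strictly. Since $\tilde z \sim \tilde x \sim x''$ and the equivalence class is totally ordered by $\precsim$, either $\tilde z \precsim x''$ or $x'' \precsim \tilde z$. In the first subcase, the inclusion $H(x'') \subset O^+(\tilde z)$ places $\tilde x$ on the forward $\tilde g$-orbit of $\tilde z$; combined with $\tilde g(\tilde x) = \tilde g(\tilde z) = y$ this would make $y$ a $\tilde g$-periodic, hence $g$-periodic, point, contradicting the standing hypothesis on $g$. In the second, harder subcase I would use $\tilde z \in \tilde B$: there are infinitely many $z' \in B$ with $\tilde z \in O^+(z')$, and each such $z'$ lies on the common $g$-orbit and satisfies $z' \sim \tilde z \sim x''$. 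If any such $z'$ satisfied $z' \precsim x''$, then $\tilde x \in H(x'') \subset O^+(z')$ would force the $\tilde g$-orbit of $z'$ to pass through $\tilde x$, and the next $\tilde g$-step would jump from $\tilde x$ directly to $y$, skipping the intermediate point $\tilde z$ in the $g_B$-order; hence $\tilde z \notin O^+(z')$, a contradiction. So all infinitely many such $z'$ must satisfy $x'' \prec z' \prec \tilde z$ strictly in the $g_B$-order, a finite segment, which is absurd.

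I expect the main obstacle to be the bookkeeping in the harder subcase: one must simultaneously track the $g_B$-order (governing $\prec$), the $\tilde g$-order (governing $O^+$ and $H$), and the equivalence relation $\sim$, and turn the qualitative observation that a single $\tilde g$-step from $\tilde x$ leaps over every $B$-point in the open $g_B$-interval $(\tilde x, y)$ into the finiteness statement that contradicts $\tilde z \in \tilde B$.
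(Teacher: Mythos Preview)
Your proposal is correct and follows essentially the same idea as the paper's proof. The paper's argument is more compressed: it sets $x_0:=g'(\tilde g(\tilde x))$ directly, notes $x_0\in\tilde B$, and uses that $x_0$ therefore lies on infinitely many forward $\tilde g$-orbits while only finitely many $g_B$-points sit between $x'$ and $x_0$; this forces some $x_1\precsim x'$ whose forward $\tilde g$-orbit contains both $\tilde x$ and $x_0$, whence $\tilde g^{a+1}(x_1)=\tilde g^{b+1}(x_1)$ and aperiodicity give $\tilde x=x_0$. Your route differs only in packaging: you first translate the claim into the order statement ``$\tilde x$ is the $\prec$-latest $\tilde B$-preimage of $y$'' and then split into the cases $\tilde z\precsim x''$ versus $x''\precsim \tilde z$, but the engine in your harder subcase (infinitely many $\tilde g$-predecessors of a $\tilde B$-point versus a finite $g_B$-segment, then a periodicity contradiction once both $\tilde x$ and the rival preimage sit on a common forward $\tilde g$-orbit) is exactly the paper's mechanism. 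Your worry about bookkeeping is unfounded; the argument you sketched already closes.
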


\begin{proof}
Assume $g'(\tilde g(\tilde x))=x_0\ne \tilde x$. Since $\tilde x\in G(x)$,  there is $x'$ such that $\tilde x$ lies on the forward $\tilde g$-orbit of every $x''$ such that $x''\precsim x'$, while $x_0\in\tilde B$ lies on infinitely
many forward $\tilde g$-orbits of the elements in the same equivalence class
with $x'$. As $g$ is invertible, there are only finitely many elements between (in
the sense of the partial order) $x'$ and $x_0$.
So there must be some (in fact, infinitely
many) $x_1$ such that $x_1\precsim x'$ and both $\tilde x$ and $x_0$ lie
on the forward $\tilde g$-orbit of $x_1$. Assume $\tilde x=\tilde g^a(x_1)$
and $x_0=\tilde g^b(x_1)$. Then $\tilde g^{a+1}(x_1)=\tilde g^{b+1}(x_1)=\tilde
g(\tilde x)$. As $g$ has no periodic orbit and $\tilde g$ as well, we must have $a=b$ and
$\tilde x=x_0$, which is a contradiction.
\end{proof}

From the lemma we know for every $\tilde x\in G(x)$, $(g')^k(\tilde g^k(\tilde
x))=\tilde x$, hence $\tilde x\in (g')^k(\tilde B)$ for every positive integer $k$. This
yields
\begin{cor} $G(x)\subset B'$ for every $x\in B$.
\end{cor}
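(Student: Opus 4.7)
The plan is to show that for every $\tilde x\in G(x)$ and every $k\ge 1$ we have $\tilde x\in (g')^k(\tilde B)$, which by the definition $B'=\bigcap_{j=1}^\infty(g')^j(\tilde B)$ immediately yields the desired inclusion. The engine driving this will be the preceding lemma, iterated along the forward $\tilde g$-orbit of $\tilde x$.

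First I would verify that $G(x)$ is forward $\tilde g$-invariant, so that starting inside $G(x)$ we remain inside $G(x)\subset\tilde B$ under iteration. Unwinding definitions, if $\tilde x\in H(x')$ for some $x'\sim x$, then $\tilde x$ lies on the forward $\tilde g$-orbit of every $x''\precsim x'$; applying $\tilde g$ again, $\tilde g(\tilde x)$ still lies on each such forward orbit, so $\tilde g(\tilde x)\in H(x')\subset G(x)$. Inductively $\tilde g^k(\tilde x)\in G(x)\subset \tilde B$ for all $k\ge 0$.

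Next I would apply the lemma at every step. Because $\tilde g^{k-1}(\tilde x)\in G(x)$, the lemma with $\tilde x$ replaced by $\tilde g^{k-1}(\tilde x)$ yields $g'(\tilde g^k(\tilde x))=\tilde g^{k-1}(\tilde x)$. A trivial induction on $k$ then gives $(g')^k(\tilde g^k(\tilde x))=\tilde x$. Since $\tilde g^k(\tilde x)\in \tilde B$, this exhibits $\tilde x$ as an element of $(g')^k(\tilde B)$ for each $k\ge 1$, and intersecting over $k$ places $\tilde x$ in $B'$.

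I do not expect a genuine obstacle here: the forward $\tilde g$-invariance of $G(x)$ is a direct consequence of how $H(\cdot)$ is built as an intersection of forward orbits, and the rest is simply the lemma applied on a whole orbit together with the definition of $B'$. The only point worth being careful about is ensuring that the same equivalence class $[x']$ witnessing $\tilde x\in H(x')$ continues to witness $\tilde g^k(\tilde x)\in H(x')$, which is automatic since the partial order and the set of $x''\precsim x'$ do not change as we iterate $\tilde g$.
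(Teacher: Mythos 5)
Your proposal is correct and follows the paper's own argument: iterate the preceding lemma along the forward $\tilde g$-orbit, using forward $\tilde g$-invariance of $G(x)$ (and $G(x)\subset\tilde B$), to obtain $(g')^k(\tilde g^k(\tilde x))=\tilde x$ for all $k$, whence $\tilde x\in\bigcap_k(g')^k(\tilde B)=B'$. You merely spell out the invariance check and the induction that the paper leaves implicit.
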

Furthermore, 
$$\bigcup_{j=-\infty}^\infty
g^j(B')\supset \bigcup_{j=-\infty}^\infty g^j(\bigcup_{x\in B}G(x))\supset
B$$
 In particular, $B'$ is nonempty and has positive measure.
We shall show $\tilde g$ is invertible
on $B'$.

$\tilde g|_{B'}$ is surjective since we have showed $\tilde g(B')=B'$.
 
 $\tilde g|_{B'}$ is injective. If $x\in B'$ then $x\in g'(\tilde B)$ and
 there is $\tilde x\in\tilde B$ such that $x=g'(\tilde x)$. But $\tilde x=\tilde
 g(g'(\tilde x))=\tilde g(x)\in B'$. This implies that $x=g'(\tilde g(x))$ for
 every $x\in B'$.
So if $x_1,x_2\in B'$ and $\tilde g(x_1)=\tilde g(x_2)\in B'$, then $x_1=g'(\tilde
g(x_1))=g'(\tilde g(x_2))=
 x_2$.

$g_*=\tilde g|_{B'}$ preserves $\nu$. Let $D_k=\{x\in B'|N(x)=k\}$ for $k=1,2,\cdots$.
Then $B'=\bigcup_{k=1}^\infty D_k$ and $D_i\bigcap D_j=\emptyset$. $g_*(D_i)\bigcap g_*(D_j)=\emptyset$ for $i\neq j$ since $g_*$ is invertible. $g_*|_{D_k}=g^k$ preserves $\nu$.
For any measurable subset $E\subset B'$, $E=\bigcup_{1\leq n<\infty}E_n$,
where $E_n=E\bigcap D_n\subset D_n$. We have
$$\nu(g_*(E))
=\sum_{1\leq n<\infty}\nu(g_*(E_n))=\sum_{1\leq
n<\infty}\nu(E_n)=\nu(E)$$
This completes the proof of the first part.

For the second part,
consider
\begin{equation}\label{Bpp}
B''=\bigcup_{k=1}^\infty(\bigcup_{j=0}^{k-1} g^j(D_k))=\bigcup_{j=0}^\infty
g^j(\bigcup_{k=j+1}^{\infty}D_k)\subset
\bigcup_{j=0}^\infty g^j(B')
\end{equation}

\begin{lemma}\label{ide}
$B''$ is $g$-invariant, and
$$B''=\bigcup_{j=-\infty}^\infty g^j(B)$$

\end{lemma}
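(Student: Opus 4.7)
The plan is to prove the two claims in order. The starting observation is that, by construction, $B''$ is the union, over all $k\ge 1$ and all $x\in D_k$, of the initial length-$k$ $g$-orbit segment $\{x,g(x),\ldots,g^{k-1}(x)\}$ which connects $x\in B'$ to its $\tilde g$-image $\tilde g(x)=g^{k}(x)\in B'$. Once this picture is fixed, both assertions reduce to careful bookkeeping with these segments, together with the invertibility of $g_*=\tilde g|_{B'}$ established in the first part of Theorem \ref{ThSet}.

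For the $g$-invariance, I would first verify $g(B'')\subset B''$: writing $y=g^j(x)$ with $x\in D_k$ and $0\le j\le k-1$, the point $g(y)=g^{j+1}(x)$ either remains in the same segment (when $j+1\le k-1$) or equals $\tilde g(x)\in B'$, which belongs to some $D_{k'}$ and hence sits at position $0$ of a new segment. For $g^{-1}(B'')\subset B''$, the interior case $j\ge 1$ is equally direct. The $j=0$ case, where $y=x\in B'$, is precisely where invertibility of $\tilde g$ on $B'$ is used: it furnishes the unique $x'\in B'$ with $\tilde g(x')=x$, and if $N(x')=k'$ then $g^{-1}(x)=g^{k'-1}(x')$ is the last point of the segment based at $x'\in D_{k'}$, hence lies in $B''$.

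For the set identity, the inclusion $B''\subset\bigcup_{j=-\infty}^\infty g^j(B)$ is immediate from (\ref{Bpp}) combined with $B'\subset B$. For the reverse direction, I would note $B'\subset B''$ (each $x\in D_k$ is the zeroth term of its own segment), so the $g$-invariance just proved yields $\bigcup_{j=-\infty}^\infty g^j(B')\subset B''$. The chain of inclusions $B\subset \bigcup_{j=-\infty}^\infty g^j(\bigcup_{x\in B}G(x))\subset \bigcup_{j=-\infty}^\infty g^j(B')$ displayed immediately before the statement of the lemma then gives $B\subset B''$, and one last use of $g$-invariance delivers $\bigcup_{j=-\infty}^\infty g^j(B)\subset B''$.

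The one delicate step is the $j=0$ case in $g^{-1}$-invariance: a predecessor segment whose right endpoint is $g^{-1}(x)$ must be exhibited, and this is the single point at which the earlier work guaranteeing that $\tilde g$ is a bijection on $B'$ actually enters. Every other step is purely formal index arithmetic against the definition of $B''$.
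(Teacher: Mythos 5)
Your proof is correct and follows essentially the same route as the paper's: both rest on the identity $\tilde g(B')=B'$ (invertibility of $g_*$ on $B'$) to handle the boundary of the index shift, and both then run the identical chain of inclusions from (\ref{Bpp}) and $G(x)\subset B'$ to pin down $B''$. The only presentational difference is that you split $g$-invariance into the two inclusions $g(B'')\subset B''$ and $g^{-1}(B'')\subset B''$ with an explicit case analysis (isolating the $j=0$ predecessor step), whereas the paper performs a single equality computation, rewriting $g(B'')$ by shifting indices and using $\bigcup_k g^k(D_k)=\tilde g(B')=B'=\bigcup_k D_k$ to absorb the top layer back into the bottom.
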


\begin{proof}
First note for each $k$, $g^k(D_k)=
\tilde g(D_k)\subset
 B'\subset B''$. Then
 $$\bigcup_{k=1}^\infty g^k(D_k)=\bigcup_{k=1}^\infty\tilde g(D_k)=
\tilde g(B')=B'=\bigcup_{k=1}^\infty g^0(D_k)$$
So $$g(B'')=\bigcup_{k=1}^\infty(\bigcup_{j=0}^{k-1} g^{j+1}(D_k))=(\bigcup_{k=1}^\infty(\bigcup_{j=1}^{k-1} g^j(D_k)))\cup(\bigcup_{k=1}^\infty g^k(D_k))= B''$$
 
From (\ref{Bpp}) and $B'\subset B''$, we have 
$$\bigcup_{j=-\infty}^\infty g^j(B')\subset \bigcup_{j=-\infty}^\infty g^j(B'')= B''\subset\bigcup_{j=0}^\infty g^j(B')$$
which implies
$$B\subset \bigcup_{j=-\infty}^\infty g^j(B')=B''\subset\bigcup_{j=-\infty}^\infty g^j(B)$$
So
$$\bigcup_{j=-\infty}^\infty g^j(B)\subset B''\text{ and hence } B''=\bigcup_{j=-\infty}^\infty g^j(B)$$
\end{proof}
Lemma \ref{ide} yields
$$\int_{B'}N(x)d\nu=\sum_{k=1}^\infty k\cdot\nu(D_k)\ge\nu(B'')=\nu(\bigcup_{j=-\infty}^\infty
g^j(B))$$

\end{proof}

\begin{cor}\label{est}
Let $C$ be a subset of $B'$ of positive measure such that $g_*(C)=C$. If $g$ is ergodic, then
$$\int_{C}N(x)d\nu\ge 1$$

\end{cor}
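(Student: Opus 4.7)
The plan is to apply Theorem \ref{ThSet} directly, with $B$ replaced by $C$, and then use ergodicity of $g$ to evaluate the resulting lower bound.

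First I would verify that the hypotheses of Theorem \ref{ThSet} remain valid when we take $C$ as the base set. Since $g_*(C)=C$ and $g_*(x)=g^{N(x)}(x)$ by construction, the map $\tilde g$ sends $C$ into itself, so the ``return'' requirement holds on $C$. The integrability hypothesis is inherited from $B$: as $C\subset B'\subset B$, we have $\int_C N\, d\nu \leq \int_B N\, d\nu = \Sigma_B <\infty$. Finally $\nu(C)>0$ by assumption, and $g$ has no periodic point. Thus Theorem \ref{ThSet} applies and produces a kernel $C'\subset C$ with
\[
\int_{C'} N(x)\, d\nu \;\geq\; \nu\Bigl(\bigcup_{j=-\infty}^{\infty} g^j(C)\Bigr).
\]
Since $C'\subset C$ and $N\geq 1$, we conclude $\int_C N\, d\nu \geq \nu\bigl(\bigcup_{j\in\mathbb{Z}} g^j(C)\bigr)$. (In fact, tracing through the construction one sees that $\bigcap_{j\geq 1}\tilde g^j(C)=C$ and that the corresponding $g'$ is just $g_*^{-1}|_C$, so the kernel coincides with $C$; but this observation is not needed for the corollary.)

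To finish, observe that $\bigcup_{j\in\mathbb{Z}} g^j(C)$ is a $g$-invariant measurable set of positive $\nu$-measure, so by ergodicity of $g$ it has full measure. Therefore $\int_C N(x)\, d\nu \geq 1$, as desired.

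There is no serious obstacle: the corollary is essentially a direct specialization of the second conclusion of Theorem \ref{ThSet} to a $g_*$-invariant subset of the kernel, combined with the ergodicity hypothesis. The only points that need any care are the bookkeeping check that $C$ inherits integrability and the return property from $B'$, both of which are immediate.
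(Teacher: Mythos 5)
Your proof is correct, and it takes a slightly different route from the paper's. The paper does not re-invoke Theorem~\ref{ThSet}; instead it repeats the key construction from the end of that theorem's proof directly: it sets $C_k=\{x\in C\mid N(x)=k\}$, forms $C'=\bigcup_{k\geq 1}\bigcup_{j=0}^{k-1}g^j(C_k)$, observes that $C'$ is $g$-invariant (by the same computation as for $B''$ in Lemma~\ref{ide}), invokes ergodicity to get $\nu(C')=1$, and then reads off $\int_C N\,d\nu=\sum_k k\,\nu(C_k)\ge\nu(C')=1$. Your approach instead specializes the already-proved theorem to base set $C$ — the hypothesis check you do (return property from $g_*(C)=C$, integrability inherited from $B$, positivity of $\nu(C)$) is exactly right — and then applies ergodicity to the lower bound $\nu\bigl(\bigcup_{j\in\mathbb{Z}}g^j(C)\bigr)$. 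Your comparison $\int_C N\ge\int_{C'}N$ (since $N\ge 1>0$ and $C'\subset C$) closes the gap cleanly; the parenthetical observation that the kernel produced is in fact $C$ itself (because $g_*$ is already invertible on $C\subset B'$ and $g_*(C)=C$) is also correct, though, as you say, not needed. What your route buys is economy — no re-derivation — while the paper's route is more self-contained and shows the reader exactly which invariant set ergodicity is being applied to; both are short and valid.
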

\begin{proof}
Let $C_k=\{x\in C|N(x)=k\}$ for $k=1,2,\cdots$.
Consider $$C'=\bigcup_{k=1}^\infty(\bigcup_{j=0}^{k-1} g^j(C_k))$$
Similar argument shows $g(C')=C'$. If $g$ is ergodic, then $\nu(C')=1$. Hence
$$\int_{C}N(x)d\nu=\sum_{k=1}^\infty k\cdot\nu(C_k)\ge\nu(C')= 1$$
\end{proof}

\section{Proof of the Main Theorem}

\subsection{Regular Tube}

If on the fiber direction there is no zero Lyapunov exponents, then from Pesin theory \cite{BP} we know for almost every point $z$ there is a regular neighborhood around $z$ on the
fiber. Inside each regular neighborhood we can introduce a local chart and identify
a "rectangle" with the square $[-1,1]^2$ ($z$ with $0$) in Euclidean space
(in higher dimension this should be recognized as the product of unit balls
in dimensions corresponding to contracting and expanding directions).

Fix some small number $\gamma>0$,
we can define admissible $(s,\gamma)$-curves as the graphs $\{(\theta,\psi(\theta))|\theta\in[-1,1]\}$
and admissible $(u,\gamma)$-curves as $\{(\psi(\theta),\theta)|\theta\in [-1,1]\}$,
if $\psi:[-1,1]\to[-1,1]$ is a $C^1$ map with $|\psi'|<\gamma$.
There is some $0<h<1$ such that, if in addition $|\psi(0)|<h$ then admissible
$(s,\gamma)$-curves are mapped by $F^{-1}$, while $(u,\gamma)$-curves by $F$,
to the admissible curves of the same types, respectively.

Consider admissible
$(s,\gamma)$-rectangles defined as the sets of points
$$\{(u,v)\in[-1,-1]^2|v=\omega\psi_1(u)+(1-\omega)\psi_2(u),0\le\omega\le
1\}$$
if $\psi_1$ and $\psi_2$ are admissible $(s,\gamma)$-curves. Admissible
$(u,\gamma)$-rectangles are
defined analogously. Like admissible curves, these admissible rectangles are also mapped to admissible
rectangles of the same types by $F^{-1}$
and $F$, respectively.

Let us fix small numbers $\epsilon>0$ and $r>0$. 
\begin{pro}\label{PrRT}
There is a "Regular Tube" $P$, which is a measurable subset of $X\times
Y$ satisfying the following properties:
\begin{enumerate}
\item $\mu(P)=\mu_0>0$ .

\item Let $\pi:P\to X$ be the projection to the base and let $B=\pi(P)$.
Then $\nu(B)=\nu_0>0$.

\item Let $P(x)=P\cap(\{x\}\times Y)$. There is some number $\sigma_0>0$ such that, for every $x\in B$,
$\sigma_0<\sigma_x(P(x))<\sigma_0(1+r)$.

\item For every $x\in B$, there is a rectangle $R(x)$ on the fiber $\{x\}\times
Y$ whose diameter is less than $\epsilon/2$. $P(x)\subset R(x)\subset\mathcal{R}(z)$
where $\mathcal{R}(z)$ is the Lyapunov regular neighborhood of some point
$z=(x,y)\in X\times Y$ on the fiber $\{x\}\times Y$.

\item For every $x\in B$ and $z\in P(x)$, if for some $n>0$, $F^n(z)$ returns
to $P$, i.e. $g^n(x)\in B$ and $F^n(z)\in P(g^n(x))$, then the connected component of the intersection
$F^n(R(x))\cap R(g^n(x))$ containing
$F^n(z)$, denoted by 
$$CC(F^n(R(x))\cap R(g^n(x)), F^n(z))$$
is an admissible
$(u,\gamma)$-rectangle in $R(g^n(x))$ and
$$CC(F^{-n}(R(g^n(x)))\cap R(x), z)$$
is an admissible $(s,\gamma)$-rectangle
in $R(x)$. Moreover, for $j=0,1,...,n$, on the fiber $\{g^j(x)\}\times Y$
we have
$$\mathrm{diam}F^j(CC(F^{-n}(R(g^n(x)))\cap R(x), z))<\epsilon$$

\item Applying Theorem \ref{ThSe}, we may assume that
there is some $m_1>0$ such that for every $m>m_1$ and $x\in B$,
inside any set
of $\sigma_x$-measure at least $\sigma_0/2$ on the fiber $\{x\}\times Y$,
we can find a $(d_{m}^F,\epsilon)$-separated set with cardinality at least
$\exp m(h_\mu(F)-r)$.

\end{enumerate}
\end{pro}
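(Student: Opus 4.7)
The plan is to build $P$ by intersecting a uniform Pesin block with a measurable family of small rectangles $R(x)$, then trim using Lusin's theorem to obtain uniform control of the conditional measures. The return-admissibility in item~(5) will come from the standard graph-transform argument inside Lyapunov regular neighborhoods, applied to the composed fiber cocycle between two visits to $P$; item~(6) will then follow from Theorem~\ref{ThSe} by one further application of Egorov.

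First I would invoke Pesin theory for the fibered system: for $\mu$-almost every $z=(x,y)$ there is a Lyapunov regular neighborhood $\mathcal{R}(z)$ on the fiber $\{x\}\times Y$, with a chart identifying it with a neighborhood of $0$ in $[-1,1]^2$ in which the stable and unstable directions are aligned with the axes, so that, for any sufficiently small $\gamma$ and the threshold $h$ of the excerpt, admissible $(s,\gamma)$- and $(u,\gamma)$-curves are permuted by $F^{\pm 1}$ as described. Since the size of $\mathcal{R}(z)$ and the Pesin constants vary only measurably with $z$, I would apply Lusin's theorem to produce a compact regular set $\Lambda\subset X\times Y$ with $\mu(\Lambda)$ arbitrarily close to $1$, on which the size of $\mathcal{R}(z)$ is bounded below by some $r_0>0$ and the hyperbolicity rates, chart norms, and H\"older constants of the splitting are uniform.

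Next I would fix $\epsilon<r_0/4$, use Rokhlin disintegration to select measurably a point $y(x)\in\Lambda\cap(\{x\}\times Y)$ of positive conditional density for $\nu$-a.e.\ $x\in\pi(\Lambda)$, take $R(x)$ to be a rectangle of diameter less than $\epsilon/2$ centered at $y(x)$ inside $\mathcal{R}(x,y(x))$, and put $P(x)=R(x)\cap\Lambda$. Fubini together with the density choice ensures $\sigma_x(P(x))>0$ on a $\nu$-positive base $B_0$. Partitioning $B_0$ according to the values of the measurable function $x\mapsto\sigma_x(P(x))$ into multiplicative intervals of the form $[\sigma_0,\sigma_0(1+r))$ and picking one of positive $\nu$-measure yields a base $B$ satisfying (1)--(4). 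For (6), an application of Egorov to the convergences in Theorem~\ref{ThSe} along fibers of $B$ lets me shrink $B$ further so that the required $(d_m^F,\epsilon)$-separated cardinalities are attained uniformly for all $m>m_1$ inside every subset of conditional measure at least $\sigma_0/2$.

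The main obstacle is item~(5). For $z\in P(x)$ with $F^n(z)\in P(g^n(x))$, both endpoints lie in the uniform block $\Lambda$, so the fiber cocycle $df_{g^{n-1}(x)}\cdots df_x$ at $z$ is hyperbolic in the Lyapunov metric, and the graph-transform argument used by Katok for Theorem~\ref{ThKa} shows that $CC(F^n(R(x))\cap R(g^n(x)),F^n(z))$ is an admissible $(u,\gamma)$-rectangle in $R(g^n(x))$ while $CC(F^{-n}(R(g^n(x)))\cap R(x),z)$ is an admissible $(s,\gamma)$-rectangle in $R(x)$. The delicate part is the intermediate-diameter bound: the stable strip through $z$, iterated to time $j$, must remain inside a Pesin neighborhood of $F^j(z)$. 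This I would handle by taking $\epsilon$ small compared to $r_0$ and exploiting the fact that both $z$ and $F^n(z)$ lie in $\Lambda$: forward contraction from $z$ controls the diameter for small $j$, backward contraction from $F^n(z)$ controls it for $j$ near $n$, and the two estimates together keep the diameter of $F^j$ of the stable strip below $\epsilon$ throughout the excursion, which is the estimate needed.
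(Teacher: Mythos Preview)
Your proposal is correct and follows essentially the same route as the paper: choose a uniform Pesin block, cut out small rectangles $R(x)$ inside regular neighborhoods, intersect to form $P(x)$, then trim for the conditional-measure window and for the uniform separated-set estimate. The paper's own proof is a three-line outline doing exactly this; you have filled in the standard Pesin/Lusin/Egorov details the paper leaves implicit. One cosmetic difference: for item~(3) the paper shrinks each $P(x)$ to force $\sigma_0<\sigma_x(P(x))<\sigma_0(1+r)$, whereas you restrict the base by partitioning the values of $x\mapsto\sigma_x(P(x))$ into multiplicative bins---either device works.
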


\begin{proof}
This regular tube can be obtained with the following
steps.
\begin{enumerate}
\item On almost every fiber, find a regular point $z\in\{x\}\times Y$ and
its regular neighborhood $\mathcal{R}(z)$. Take $R(x)\in\mathcal{R}(z)$ with
diameter less than $\epsilon/2$.
\item  Find $P(x)\subset R(x)$ satisfying property (5). There is some $\sigma_0>0$
such that $B_0=\{x|\sigma_x(P(x))>\sigma_0\}>0$. For $x\in B_0$, shrink the
size of $P(x)$ properly such that $\sigma_0<\sigma_x(P(x))<\sigma_0(1+r)$.
\item Find $m_1$ and $B\subset B_0$ such that $P=\bigcup_{x\in B}P(x)$ also
satisfies property (6) and $\nu(B)>0$. $P$ is as required.
\end{enumerate}
\end{proof}

\subsection{Control of Return Time}

We start with a regular tube $P$. Applying Theorem \ref{ThInt}, we can
find a measurable section $q:B\to P$, $\pi\circ q=Id$ such that
$$\int_B N_1(x)d\nu\le\frac{1}{\sigma_0}$$
where $N_1(x)$ is the first return time of $q(x)$.

Denote by $\chi_P$ the characteristic function of the measurable set $P$. Consider the sets
$$\mathcal{A}_n=\{z\in X\times Y|\text{ For every $k\ge n$, }$$
$$\sum_{i=1}^k \chi_P(F^iz)<k\mu_0(1+\frac
r3)\text{ and }\sum_{i=1}^{k(1+r)} \chi_P(F^iz)>k\mu_0(1+\frac
{2r}3)\}$$
(throughout this paper, numbers like $k(1+r)$ are rounded to the nearest
integer, if needed).
Since $\mu$ is ergodic, Birkhoff Theorem tells us for almost every $z$,
$$\lim_{n\to\infty}\frac 1n\sum_{i=1}^n \chi_P(F^iz)=\mu(P)=\mu_0$$
which implies
$$\lim_{n\to\infty}\mu(\mathcal{A}_n)=1$$
Let $\mathcal{B}_n=\{x|\sigma_x(P(x)\cap\mathcal{A}_n)>\sigma_0(1-r)\}$.
Then as $n\to\infty$,
$$\nu(B\backslash\mathcal{B}_n)\to 0\;\text{ and }\;\int_{B\backslash\mathcal{B}_n}
N_1(x)d\nu\to 0$$
There is $m_0$ and a measurable subset $B_1\subset B$ with the following
properties
\begin{enumerate}
\item $\nu(B_1)>\nu_0(1-r)$. Let $P'=\pi^{-1}(B_1)\cap P$. $\mu(P')>\mu_0(1-r).$
\item For $x\in B_1$, let $P^{(m)}(x)=P(x)\cap
\mathcal{A}_{m}$. For $m>m_0$, $\sigma_x(P^{(m)}(x))>\sigma_0(1-r)$.
\item Let $B_2=B\backslash B_1$. $\int_{B_2}N_1(x)<r$.

\end{enumerate}

Now let us fix $m>\max\{m_1,m_0\}$. For convenience, denote by $\mathcal{K}=[m\mu_0(1+\frac r2)]$ the integer
part of $m\mu_0(1+\frac r2)$. For large $m$,
$$m\mu_0(1+\frac r3)<\mathcal{K}<m\mu_0(1+\frac{2r}3)$$

For every $x\in B_1$, $P^{(m)}(x)>\sigma_0(1-r)>\sigma_0/2$, by property
(6) of the regular tube, there is a $(d_{m}^F,\epsilon)$-separated set $E(x)\subset P^{(m)}(x)$ with cardinality
$$|E(x)|>\exp m(h_\mu(F)-r)$$
For $z\in E(x)\subset\mathcal{A}_m$, the $\mathcal{K}$-th
return time of $z$ to $P$ is an integer number between $m+1$ and $m(1+r)$.
So there is $V(x)\subset E(x)$ with cardinality
$$|V(x)|=[\frac 1{mr}\exp m(h_\mu(F)-r)]$$
and the $\mathcal{K}$-th return times for points in $V(x)$ are the same,
denoted by $N(x)$.
The set $\bigcup_{x\in B_1} V(x)$ can be chosen to be the union of $[\frac 1{mr}\exp m(h_\mu(F)-r)]$ measurable sections over $B_1$.

$N(x)$ is a measurable function on $B_1$. We extend $N(x)$ to a measurable function on $B$: for $x\in B_2$, let $N(x)=N_1(x)$. Consider the map $\tilde
g(x)=g^{N(x)}(x)$.
$\tilde g(x)$ is well defined on $B$ and $\tilde g(B)\subset B$.
Moreover,
$$\int_{B} N(x)d\nu\le\int_{B_1}N(x)d\nu+\int_{B_2}N_1(x)d\nu<m(1+r)\cdot\nu_0+r<\infty$$
Applying Theorem \ref{ThSet} we can find the kernel $B_3\subset B$ of positive
measure such that the $\tilde g$ restricted on $B_3$ is
invertible and preserves $\mu$. We can assume that $g_*=\tilde g|_{B_3}$ is ergodic
(with respect to the measure induced by $\nu$) by taking an ergodic component
of positive $\nu$-measure.

Let $B_4=B_3\cap B_1$ and $B_5=B_3\cap B_2$. 
Let $\mathcal G(x)$ be the
first return map on $B_4$ with respect to $g_*$. Then $\mathcal{G}$ is invertible
and preserves $\nu$.
Define measurable functions $\rho$ and $l$ on $B_4$ such that
$\mathcal{G}(x)=g^{\rho(x)}(x)=g_*^{l(x)}(x)$.
For $x\in B_4$,
$$\rho(x)=\sum_{j=0}^{l(x)-1} N(g_*^j(x))$$
and $$\int_{B_4}\rho(x)d\nu=\int_{B_3}N(x)d\nu$$
Note $B_3=B_4\cup B_5\subset B_4\cup B_2$. From Corollary \ref{est},
$$1\le \int_{B_3}N(x)d\nu\le\int_{B_4}N(x)d\nu+\int_{B_2}N(x)d\nu\le m(1+r)\cdot\nu(B_4)+r$$
So 
$$\nu(B_4)\ge\frac{1-r}{m(1+r)}$$
and the average return time
$$\frac{1}{\nu(B_4)}\int_{B_4}\rho(x)d\nu\le\frac{m(1+r)\cdot\nu(B_4)+r}{\nu(B_4)}$$
$$=m(1+r)+\frac{r}{\nu(B_4)}\le \frac{m(1+r)}{1-r}$$

\subsection{Construction of Horseshoes}

We are going to construct a skew product map with base $\mathcal G$ on $B_4$ and horseshoes
on fibers. we mostly follow the argument of A.Katok (see for example,
\cite[Section 15.6]{BP}). The novelty here is that in our case, 
for $x\in B_4$ and $z\in V(x)$, $\mathcal{F}(z)=F^{\rho(x)}(z)$ does not necessarily return
to $R(g^{\rho(x)}(x))=R(\mathcal{G}(x))$.
To live with this we consider the orbits of $g_*$ and use the admissible
rectangles around the points $q(g_*^k(x))$, $k=1,2,\dots, l(x)-1$,  to carry over the horseshoe structure until
it finally returns to $R(\mathcal{G}(x))$.

 For every $x\in B_3$ and $z\in\{x\}\times Y$,
let $F_*(z)=F^{N(x)}(z)\in\{g_*(x)\}\times Y$. Note $F_*$ is invertible on
$B_3\times Y$ since $F$ and $g_*$ are invertible. If in addition $z\in P(x)$,
then $F_*(z)^{\pm 1}\in P(g_*^{\pm 1}(x))$.

For $x\in B_4$, we note $g_*^k(x)\in B_5$ for $k=1,2,\dots, l(x)-1$
and $g_*^{l(x)}(x)\in B_4$. 
If $z\in V(x)$, then the connected component $$CC(R(x)\cap F_*^{-1}(R(g_*(x))),
z)$$
is an
admissible
$(s,\gamma)$-rectangle in $R(x)$.
As $F_*(z)=F^{N(x)}(z)$ is the $\mathcal{K}$-th return of $z$
to $P$ and $N(x)>m$, and points in $V(x)$ are $(d_{m}^F,\epsilon)$-separated,
from property (5) of the set $P$, we can
conclude that this connected component contains no other points in $V(x)$.
So there are $[\frac 1{mr}\exp m(h_\mu(F)-r)]$ such connected
components, each of which contains exactly one point in $V(x)$.

Let
$$S_0(z)=CC(R(x)\cap F_*^{-1}(R(g_*(x))),z)$$
and for $k=1,2,\dots,l(x)-1$, define by induction
$$S_{k}(z)=CC(F_*(S_{k-1}(z))\cap F_*^{-1}(R(g_*^{k+1}(x))), q(g_*^k(x)))\subset
F_*(S_{k-1}(z))$$
Then for each $k$, $S_k(z)$ is part of an admissible
$(s,\gamma)$-rectangle in $R(g_*^k(x))$ such that $F_*(S_k(z))$ is an admissible
$(u,\gamma)$-rectangle in $R(g_*^{k+1}(x))$. Moreover,\\
$F_*^{-(l(x)-1)}(S_{l(x)-1}(z))\subset S_0(z)$. So we can select for each $z\in V(x)$
a point $u(z)\in F_*^{-(l(x)-1)}(S_{l(x)-1}(z))$.
Then 
$$\mathcal{F}(z')=F_*^{l(x)}(u(z))\in F_*(S_{l(x)-1}(z))\subset R(g_*^{l(x)}(x))=R(\mathcal{G}(x))$$
and
$$CC(\mathcal{F}(R(x))\cap R(\mathcal{G}(x)),\mathcal{F}(u(z)))=F_*(S_{l(x)-1}(z))\subset\mathcal{F}(S_0(z))$$
is an admissible
$(u,\gamma)$-rectangle in $R(\mathcal{G}(x))$. Note that $\mathcal{F}$ is
invertible and $S_0(z)$ are disjoint for different $z\in V(x)$. So there
are $[\frac 1{mr}\exp m(h_\mu(F)-r)]$ such rectangles. Likewise, the
pre-images
$$\mathcal{F}^{-1}(CC(\mathcal{F}(R(x))\cap R(\mathcal{G}(x)),\mathcal{F}(u(z))))=CC(R(x)\cap
\mathcal{F}^{-1}(R(\mathcal{G}(x))),u(z))$$
 are $[\frac 1{mr}\exp m(h_\mu(F)-r)]$ disjoint admissible
$(s,\gamma)$-rectangle in $R(x)$.

Let $U(x)=\{u(z)|z\in V(x)\}$.
For $x\in B_4$, consider the set
$$\Lambda(x)=\bigcap_{n\in\mathbb{Z}}\mathcal F^{-n}(\bigcup_{z'\in
U(\mathcal
{G}^{n}(x))}CC(R(\mathcal{G}^n(x))\cap
\mathcal{F}^{-1}(R(\mathcal{G}^{n+1}(x))),z'))\subset R(x)$$
Then
$\Lambda(\mathcal{G}(x))=\mathcal{F}(\Lambda(x))$. $\Lambda(x)$ is the intersection
of infinitely many layers, each of which consists of $[\frac 1{mr}\exp m(h_\mu(F)-r)]$
disjoint connected components. If $z''\in\Lambda(x)$, then for each $n\in\mathbb{Z}$,
$\mathcal{F}^n(z'')$ belongs to exactly one of the connected components in 
$$\bigcup_{z'\in
U(\mathcal
{G}^{n}(x))}CC(R(\mathcal{G}^n(x))\cap
\mathcal{F}^{-1}(R(\mathcal{G}^{n+1}(x))),z')$$
Meanwhile, for any sequence $\{z_n\in U(\mathcal{G}^n(x))\}_{n\in\mathbb{Z}}$,
The intersection
$$\bigcap_{n\in\mathbb{Z}}\mathcal F^{-n}(CC(R(\mathcal{G}^n(x))\cap
\mathcal{F}^{-1}(R(\mathcal{G}^{n+1}(x))),z_n))$$
contains exactly one point in $\Lambda(x)$.
Therefore, $\Lambda=\bigcup_{x\in B_4}\Lambda(x)$ is invariant of $\mathcal F$ and $\mathcal F|_\Lambda=(\mathcal
G,\mathcal H)$, with the base $\mathcal G$ on $B_4$ and $\mathcal H$ on the
fibers conjugate
to the full shift on $[\frac 1{mr}\exp m(h_\mu(F)-r)]$ symbols.

\subsection{Estimate of Entropy}
As $\mathcal{G}$ is ergodic, $\Lambda$ carries many ergodic invariant measures for $\mathcal F|_\Lambda$ of the form 
$$\frac{1}{\nu(B_4)}\int_{B_4}\tau_x d\nu$$
where $\tau_x$ for each $x\in B_4$ is supported on $\Lambda(x)$ and $\tau_x\circ\mathcal{F}^{-1}=\tau_{\mathcal{G}(x)}$.
Entropies of these measures vary from 0 to the topological entropy of the full shift which equals
$$\log[\frac 1{mr}\exp m(h_\mu(F)-r)]$$
Ergodic measures of arbitrary intermediate entropies can be obtained by properly
assigning weights to different symbols for the shift.
These measures induce ergodic invariant measures of $F$. 
The average return time is
$$\frac{1}{\nu(B_4)}\int_{B_4}\int_{\Lambda(x)}\rho(x)d\tau_x d\nu=\frac{1}{\nu(B_4)}\int_{B_4}\rho(x)d\nu\le m(1+r)/(1-r)$$
So the measures we constructed has the maximum entropy no less than
$$\log[\frac 1{mr}\exp m(h_\mu(F)-r)]\cdot\frac{1-r}{m(1+r)}$$
which is arbitrarily close to $h_\mu(F)$ as $r\to 0$ and $m\to\infty$. This completes
the proof of Main Theorem.



\end{document}